  \let\mathbbm\mathbb
\newcommand{\newsstheorem}[2]{
  \newaliascnt{#1}{dummy}
  \newtheorem{#1}[#1]{#2}
  \aliascntresetthe{#1}
  \expandafter\def\csname #1autorefname\endcsname{#2}
}
\theoremstyle{plain}
\theoremstyle{definition}
\theoremstyle{remark}
\setlist[enumerate,1]{label={(\roman*)}}
\setlist[enumerate,2]{label={(\alph*)}}
\setlist[enumerate,3]{label={(\Roman*)}}
\newcommand\mathof[1]{{\operator@font#1}} \makeatother
\newcommand\dd{\mathof{d}}
\newcommand\defbb[1]{\expandafter\newcommand\csname #1#1\endcsname{\mathbb{#1}}}
\newcommand\Ind{\mathbbm{1}}
\newcommand\Indic[1]{\Ind_{\{#1\}}}
\newcommand\from{\colon}
\newcommand\Aa{\mathcal{A}}
\newcommand\Ff{\mathscr{F}}
\providecommand{\email}[1]{\href{mailto:#1}{\nolinkurl{#1}}}
\newcommand\revised[1]{#1}
\title{Stochastic optimal control of L\'evy tax processes with bailouts}
\author{
  D.\ Al Ghanim\footnote{Department of Mathematics, College of Science, King Saud University, P.O.Box 2455, Riyadh 11451, Saudi Arabia}
  \and 
  R.\ Loeffen\footnote{University of Liverpool, Institute for Financial and Actuarial Mathematics, Mathematical Sciences building, Liverpool L69 7ZL, ronnie.loeffen@liverpool.ac.uk}
  \and
  A.\ R.\ Watson\footnote{University College London, London, UK, alexander.watson@ucl.ac.uk}
}
\date{\today}
\begin{document}
\maketitle

\begin{abstract}
  We consider controlling the paths of a spectrally negative Lévy process by
  two means: the subtraction of `taxes' when the process is at an all-time
  maximum, and the addition of `bailouts' which keep the value of the process
  above zero. We solve the corresponding stochastic optimal control problem of
  maximising the expected present value of the difference between taxes
  received and cost of bailouts given. Our class of taxation controls is larger
  than has been considered up till now in the literature and makes the problem
  truly two-dimensional rather than one-dimensional.  Along the way, we define
  and characterise a large class of controlled Lévy processes to which the
  optimal solution belongs, which extends a known result for perturbed Brownian
  motions to the case of a general L\'evy process with no positive jumps. 
\end{abstract}

\paragraph{Key words and phrases.} Risk process, tax process, spectrally negative L\'evy process, capital injections, optimal control, perturbed L\'evy process, Skorokhod reflection.

\section{Introduction}

Consider the capital of an insurance company as it evolves over time.  A
government considers two interventions: first, a loss carryforward taxation
regime, in which some proportion, to be referred to as the tax rate, of the
company's increase in capital is taken whenever the level of capital reaches a
new record; and second, a bailout system, in which the government injects money
to the company in order to keep its capital level positive. We assume the
government will bail the company out (instead of letting it go bankrupt); so the company is considered `too big to fail’.
\revised{Though one can debate the use of such bailouts, they do occur,
most notably during the 2008 financial crisis, and since they affect the tax
strategy it is important to seriously consider models with this feature.}
Assume  the government wants to choose the tax rate and the
size and timing of the bailouts in order to maximise the expected discounted
difference between tax revenue and cost of bailouts. In this work, we show
that, when the company's capital before government interventions is modelled by
a spectrally negative Lévy process and the tax rate is at least $\alpha\ge 0$
and at most $\beta\in(0,1)$, this can be achieved using a threshold tax rate and
minimal bailouts.  Informally, a threshold tax rate means that tax is paid at
rate $\alpha$ when the capital level is below a threshold $b\ge 0$, and at rate
$\beta$ when it is above.  By minimal bailouts, we mean that only the minimum
amount of capital is injected  in order to keep the company solvent.
\revised{It makes sense that such a strategy might be optimal: bailouts are costly and so should be minimal, if the capital is large it is relatively safe to impose the highest tax rate, whereas if the capital level is low, then the risk of having to do costly bailouts is high which justifies reducing the tax burden of the company as much as possible. Note that the minimum and maximum tax rate $\alpha$ and $\beta$ are fixed and cannot be changed by the government.}

There is some existing literature on optimal taxation of L\'evy tax processes.
In the setting where there are no government bailouts (so the company
ceases to exist when its capital drops below 0),  Albrecher et al.~in
\cite[Section~4]{LevyInsuranceRiskProcessWithTax}  
start with a threshold tax rate (with the minimum tax rate
being $\alpha=0$) and then look for the optimal threshold level $b$ that
maximises tax revenue (at any initial capital level);
see also \cite[Section~2.3]{LundbergRiskProcessWithTax} for the special case where the spectrally
negative L\'evy process is a compound Poisson risk process with exponentially
distributed claims.
Wang et al.\ \cite{wang2019optimalLastOne} do the same but either include minimal bailouts
(and their cost) or a terminal value at the time of bankruptcy.
Wang and Hu \cite{OptimalLossCarryForwardWang} maximised tax revenue without
bailouts over all $[\alpha,\beta]$-valued `latent' tax rate functions (i.e.,~a tax
rate which is a function of the running supremum of the uncontrolled capital
level; see \cite{ALW-tax-equiv} for this terminology).

A closely related optimal control problem is the optimal dividend problem with
mandatory capital injections (so bankruptcy of the company is not allowed)
where money can be taken out, in an adapted way,
to give as dividends to
shareholders who in return cover the capital injections. This problem where one
wants to maximise expected discounted value of the paid out dividends minus the
cost of capital injections has been studied in \cite{shreveetal1984},
respectively \cite{APP2008}, in the case where the uncontrolled risk process is
a diffusion, respectively spectrally negative L\'evy process. In the latter
case it was shown in \cite{APP2008} (see also  \cite[Example 1]{shreveetal1984}
for the case of a Brownian motion with drift) that the optimal strategy is a
dividend barrier strategy where dividends are paid out in a minimal way to keep
the capital of the company below a certain level in combination with injecting
capital in a minimal way to keep it positive. This mirrors our main result for
the optimal taxation problem with mandatory bailouts. 

We remark that in optimal
dividend problems where no capital injections are allowed, or where capital
injections are optional rather than mandatory, the optimal dividend strategy
can be more exotic and one needs to assume a condition on the 
L\'evy process in order for the dividend barrier strategy to be
optimal, see \cite{loeffen2008} and \cite{gajek_kucinski}. This is consistent
with problems involving optimal taxation without bailout studied in the
aforementioned papers
\cite{LevyInsuranceRiskProcessWithTax,OptimalLossCarryForwardWang,wang2019optimalLastOne}
where results on optimality of the threshold tax rate strategy are provided
under some conditions on the L\'evy process.
Indeed, such conditions are not required in
\cite{wang2019optimalLastOne} for the problem where minimal bailouts are present.

We now rigorously state our stochastic optimal control problem of interest. For any stochastic
process $Y=(Y_t)_{t\geq 0}$ whose sample paths has right and left limits, we write $Y_{t-}=\lim_{s\uparrow t}Y_s$,
$Y_{t+}=\lim_{s\downarrow t} Y_s$ and $\Delta Y_s:=Y_{s+} - Y_{s-}$. Further, for a measurable function $f$  and an
increasing and right-continuous function $G$, we define $\int_{0^+}^t f(s) \dd G(s)$ as the Stieltjes integral
over $(0,t]$, and $\int_{0}^t f(s) \dd G(s)$ as the Stieltjes integral over
$[0,t]$; that is, $\int_{0}^t f(s) \dd G(s) = f(0)G(0) + \int_{0^+}^t f(s) \dd
G(s)$. For reasons that will become clear, we need to, for a given path, define a running supremum with a prescribed starting value. To this end, for a path $Y=(Y_t)_{t\geq 0}$ and $\bar x\geq Y_0$ we define 
\begin{equation}
	\label{e:overlineY}
	\overline{Y}_t = \bar{x}\vee \sup_{0\leq s\le t}Y_s,
\end{equation}
and call $\overline{Y}=(\overline Y_t)_{t\geq 0}$ the \emph{running supremum} of $Y$ with \emph{initial maximum level} $\bar{x}$. 
We let $(\Omega,\mathcal F)$ be a measurable space and on it we define a family of probability
measures $(\mathbb P_{x,\bar x})_{x\in\mathbb R,\bar x\geq x}$ and a stochastic process
$X=(X_t)_{t\geq 0}$  such that, under $\mathbb P_{x,\bar x}$, $X$ is a {spectrally
negative} Lévy process starting at $x$ and with initial maximum level $\bar x$ (i.e.~$X_0=x$ and $\overline X_0=\bar x$). We refer to Section \ref{sec_levy} below for
some background information on spectrally negative L\'evy processes.
\revised{Let $(\Ff^\circ_t)_{t\geq 0}$ be the natural filtration of $X$, and
$\Ff_t \coloneqq \cap_{s\ge t} \Ff^\circ_t$ its right-continuous enlargement.}
The random variable $X_t$ represents the uncontrolled capital level at
time $t$ of an insurance company. We wish to understand the effect of
controlling the capital level by introducing \emph{taxation}, which  yields
revenue for the controller, and \emph{bailouts} (or \emph{capital injections}),
which increase the process but have a cost for the controller. To this end, we
fix a \emph{lower tax rate bound} $\alpha \ge 0$, an \emph{upper tax rate
bound} $\beta \in [\alpha,1)\cap(0,1)$, and a \emph{discount rate} $q > 0$. Next we
define our class of controls.
\begin{definition}\label{def_admis}
  A pair of
  stochastic processes $(H,L)=((H_t)_{t\geq 0},(L_t)_{t\geq 0})$ is called an
  admissible control under $\mathbb P_{x,\bar x}$ if the following holds:
  \begin{enumerate*}
    \item\label{item_admis_adapt}  $H$ and $L$ are left-continuous and adapted to $(\Ff_t)_{t\geq 0}$,
    \item\label{item_admis_bounds} $H_t \in [\alpha,\beta]$ for all $t\ge 0$,
    \item $L$ is increasing (in the weak sense) with  $L_0 = 0$,
    \item\label{item_admis_cont} $\overline{X+L}$ is continuous,
    \item\label{item_admis_expcap} $\EE_{x,\bar{x}} \int_0^\infty e^{-qs} \dd L_s < \infty$,
    \item\label{item_admis_pos}  $\mathbb P_{x,\bar x}(U_{t+}\ge 0$ for all $t\ge 0)=1$ 
      \end{enumerate*}	where $U=(U_t)_{t\geq 0}$ is the controlled process defined by
      \begin{equation}
        \label{e:controlled-V}
        U_t = X_t + L_t - \int_{0^+}^t H_s \, \dd(\overline{X+L})_s.
      \end{equation} 
  We write $\Pi_{x,\bar x}$ for the set of all admissible controls under $\mathbb P_{x,\bar x}$.	
\end{definition}	
For an admissible control $(H,L)$, $H_t$ represents the tax rate at time $t$
and $L_t$ represents the cumulative amount of bailout funds added to $U$ up to
time $t$. From \eqref{e:controlled-V} and Lemma~\ref{l:sup} below, one can see
that taxes are only paid when the controlled capital process $U$ is at its
maximum, i.e., at those times $t$ such that $\overline U_t=U_t$. Regarding the
conditions of an admissible control in Definition \ref{def_admis},
\ref{item_admis_pos} ensures that the controlled capital level cannot be
strictly negative for any duration of time, which reflects that the
controller/government is compelled to bail the company out. We highlight that,
as, the bailout control $L$ is assumed to be left-continuous (rather than
right-continuous), the controlled process $U$ can be strictly negative at a
discrete set of time points.  On the one hand, left-continuous controls are
natural, as the controller can only react after an event, in particular a jump.
On the other, this framework also extends easily to a version of the control
problem in which bailouts are optional instead of mandatory; see, for example,
\cite[Definition 1]{gajek_kucinski}. Conditions \ref{item_admis_cont} and
\ref{item_admis_expcap} in Definition \ref{def_admis} are present for technical
reasons, though they also make sense from a practical point of view:
\ref{item_admis_cont} avoids controls where tax has to be paid over a lump-sum
bailout and  \ref{item_admis_expcap} excludes controls where the expected
discounted value of the total bailouts is infinity, which is a common
assumption in these type of problems; we point once again to \cite[Definition
1]{gajek_kucinski}.

In order to state our optimality criterion, we fix a \emph{bailout penalty
factor} $\eta \ge 0$, an initial maximal capital $\bar x\geq 0$ and an initial
capital $x\leq \bar x$ and we define, for $(H,L) = \pi \in \Pi_{x,\bar x}$, the
\emph{value function}
\begin{equation*}
  v^\pi(x,\bar{x})
  =
  \EE_{x,\bar{x}}\Biggl[ \int_{0^+}^\infty e^{-qs} H_s\, \dd(\overline{X+L})_s
  - \eta \int_0^\infty e^{-qs} \dd L^+_s \Biggr],
\end{equation*}
where $L^+=(L^+_t)_{t\geq 0}$ is the right-continuous version of $L$, i.e.
$L^+_t=L_{t+}$.  
\revised{In this definition, the first term represents the present value of taxation and the second term the present value of bailouts, with the factor $\eta$ representing the a multiplicative additional cost of each bailout.}

We wish to solve the \emph{optimal control problem}
\begin{equation}
  \label{e:control}
  v^*(x,\bar{x})
  =
  \sup_{\pi\in \Pi_{x,\bar x}}
  v^\pi(x,\bar{x}),
\end{equation}
by finding $v^*$ and for each pair $(x,\bar x)$ a choice of $\pi$ which attains it.  

Our contributions are the following.  First, in Theorem~\ref{t:control}, we
solve \eqref{e:control} under a
minor condition on the L\'evy measure of $X$ (equivalent to $\Pi_{x,\bar x}$
being nonempty for some (or equivalently all) $(x,\bar x)$) in the case where $\eta\geq 1$, and show that an optimal control
is given by a threshold tax rate, which corresponds to
$H_t=\alpha+(\beta-\alpha)\mathbf 1_{\{\overline U_t> b\}}$ for some $b\ge 0$,
in combination with minimal bailouts.
\begin{figure}
  \begin{center}
    \includegraphics[scale=0.6]{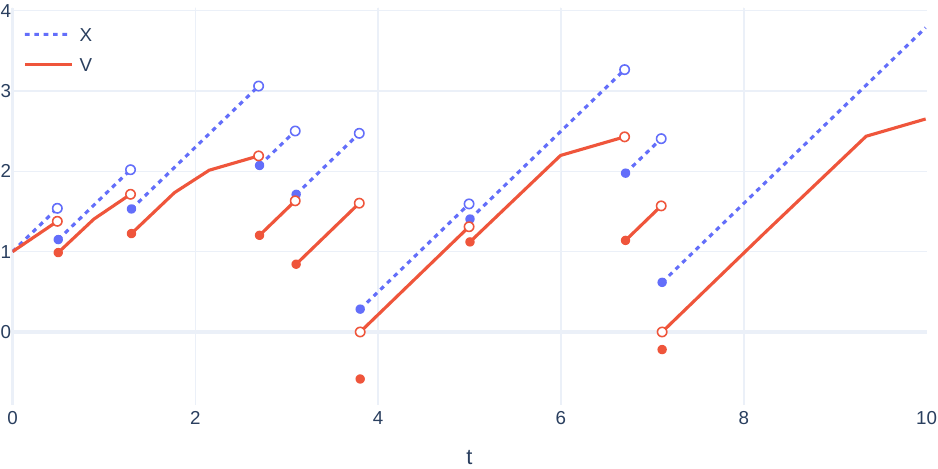}
    \caption{%
      Illustration of the natural tax process $V$ with threshold tax rate at $b$
      and minimal bailouts, where
      $\alpha = 0.3$, $\beta = 0.7$ and $b=2$.
      The blue (dashed) line is the path of the background Lévy process $X$.
      The red line is the process $V$. Note that at times when
      $V$ experiences a bailout, its instantaneous negative
      value is retained and the process is sent to the value zero
      immediately after.
      \label{f:reflection}%
    }
  \end{center}
\end{figure}
A sample path of the controlled capital process associated
with such threshold tax rate, denoted by $V$, is provided in Figure \ref{f:reflection}.
The assumption $\eta\geq 1$ makes sense because in practice there is a cost of
capital. But in fact,  Theorem \ref{t:control} would no longer hold when we allow
$\eta<1$: an optimal control for \eqref{e:control} in that case might look very
different, with minimal bailouts no longer being optimal.

Second, we observe that the optimally controlled capital process belongs to the class of
natural tax processes with minimal bailouts with `natural' meaning that the tax rate is a function of (the running supremum of) the
controlled capital process (see also \cite{ALW-tax-equiv}). 
We give a rigorous definition of this class involving a construction and characterisation
of its elements.
If the
initial maximum capital level $\bar x$ is strictly positive, then  this is not hard to do
(see constructions in \cite{IvanovsI,wang2019optimalLastOne}
for specific natural tax rates), because the tax payments and bailouts
occur at distinct times,
and the existence and
uniqueness of the process without bailouts has been shown in
\cite{ALW-tax-equiv}.  However, if $\bar x=0$ and the 
L\'evy process is of unbounded variation, then tax payments and bailouts take
place simultaneously within any time interval $(0,\epsilon)$ with $\epsilon>0$,
and it is less clear how to construct the process.
We do this by introducing a Skorokhod-type problem for
c\`adl\`ag paths and use a contraction argument to prove the
existence and uniqueness of its solution, which we call the
tax-reflection transform. This extends known results for the special case where $X$ is a Brownian motion; see \cite{legall_yor,davis96,davis_newyork,davis99,chaumount_doney} and Remark \ref{remark_perturbed} below. 

Third, whereas in
\cite{LevyInsuranceRiskProcessWithTax,wang2019optimalLastOne}
(respectively \cite{OptimalLossCarryForwardWang} and \cite{wang_zhang}), the authors optimise over tax rate controls
$H$ of the form $H_t=\beta\mathbf 1_{\{\overline U_t> b\}}$ where $b\geq 0$
(respectively $H_t=f(\overline X_t)$ where $f$ is an $[\alpha,\beta]$-valued
measurable function), we allow $H$ to be a more general adapted process. Given
that a natural tax process with (or without) minimum bailouts is not Markov on
its own but only when considered together with its running supremum (see
Proposition \ref{prop_markov} below) this requires one to consider the optimal
value function $v^*$ in \eqref{e:control} as a function of $(x,\bar x)$ and not
just $x$. This makes it more challenging to establish a verification lemma for
solving the optimal control problem in comparison to, e.g., optimal dividend
problems (with or without capital injections) for which the optimally
controlled process is on its own Markovian. Our third contribution is that
we provide a rigorous framework for dealing with optimal taxation problems for
this general class of tax rate controls, which, in principle, should be easily
adaptable to handle other similar optimal taxation problems as well.
Indeed, the first author has shown in \cite{AlGhanim-thesis} that the question
of optimal taxation without bailouts can be handled using this approach.

The last contribution is that we provide an expression, in terms of the scale
functions of the L\'evy process, for the value function
associated with the threshold tax rate and minimal bailouts using the novel
approach developed in the first author's PhD thesis \cite{AlGhanim-thesis}.
This involves a characterisation
lemma which is similar in nature to the verification lemma that we use for the
optimal control problem. When $x=\bar x>0$ such an expression for the value
function has been derived in \cite[Equation (14)]{wang2019optimalLastOne} in
the case where $\alpha=0$ and can be derived from \cite[Theorem 2]{IvanovsII}
by taking $\theta\downarrow 0$ there in the case where the tax rate is constant
(i.e. $\alpha=\beta$); both papers use different methods than ours.

\medskip\noindent
The present article is an elaboration on the work done in the first author's
PhD thesis \cite{AlGhanim-thesis}, where a slightly weaker version of the main
theorem was proved (in which taxation could not start from zero capital).
During preparation of the present work, \cite{wang_zhang} was made available, which
studies a version of the control problem
where it is assumed that the tax rate controls are latent tax rates, bailouts
are assumed to be minimal (so there is no optimisation with respect to the bailout
strategy) and $x=\bar x>0$ in \eqref{e:control}. The optimal control problem in
\cite{wang_zhang} can be seen as a deterministic optimal control problem,
because under these restrictions on the controls,
one can obtain an analytic expression for the value function; this is done
in \cite{wang_zhang} by
making the connection with draw-down reflected L\'evy processes, which is an
alternative to our approach using a characterisation lemma.

The rest of the paper is structured as follows. 
\revised{In \cref{s:bailout}, after a few preliminaries on spectrally negative L\'evy processes, we
give an informal definition of the natural tax process with minimal bailouts in
a general setting, and provide expressions for the net present value of tax and
injections associated with a threshold tax rate. In \cref{sec_mainresult}, we state our main result about the solution of the control problem, whereas Section \ref{sec_example} covers an example which is computationally feasible and plots the optimal tax threshold
and value function against various parameters. Most of the proofs are postponed to Section \ref{sec_proofs}. Section \ref{sec:construction} contains the rigorous definition and construction of the natural tax process with minimal bailouts. For this we need to introduce the tax-reflection transform of a c\`agl\`ad path, which is done  in Section \ref{sec:tax-reflection}.
Section \ref{sec_verif_char} consists of the verification lemma used for solving the control problem and the characterisation lemma used to prove, as well as the proof itself of, the expression for the value function associated with a natural tax process with a threshold tax rate and minimal bailouts. Finally, in Section \ref{sec_proof_mainthm} the main result is proved.}

\section{Natural tax processes with minimal bailouts}
\label{s:bailout}

\revised{After some preliminary discussion on Lévy processes, we move on to
describe the natural tax process with minimal bailouts,
and compute a functional representing the net present value of taxation
less bailout cost.}

\subsection{Preliminaries on spectrally negative Lévy processes}\label{sec_levy}

A stochastic process is called a
\emph{Lévy process} if it has stationary,
independent increments and càdlàg paths.
If additionally the paths have no positive jumps,
and are not decreasing (in the weak sense), then $X$ is called a
\emph{spectrally negative Lévy process}.
Recall that $X$ under $\mathbb P_x$ is assumed to be such a process starting at $x$. The \emph{Laplace exponent}
$\psi \from [0,\infty)\to \RR$ defined by
$\EE_x[e^{s X_t}] = e^{s x+ t\psi(s)}$,   has the following \emph{Lévy--Khintchine representation}:
\[
  \psi(s) = cs + \frac{1}{2}\sigma^2s^2
  + \int_0^\infty (e^{-s \theta}-1+s \theta \Indic{\theta\leq 1})\, \nu(\dd \theta),
  \qquad s\ge 0.
\]
Here, $c \in \RR$ incorporates any deterministic drift,
$\sigma \ge 0$ is the volatility of the Brownian component,
and $\nu$ is a measure on $(0,\infty)$ satisfying 
$\int_0^\infty \min\{1, \theta^2\} \, \nu(\dd \theta) < \infty$, which represents
the intensity and magnitude of the jumps. We call $(c,\sigma,\nu)$ the \emph{L\'evy triplet} of $X$.
%
%
If the L\'evy triplet is such that $\sigma=0$ and $\int_0^1\theta\nu(\mathrm d \theta)<\infty$ then $X$ has paths of bounded variation; otherwise the paths of $X$ have unbounded variation.


Along with the Laplace exponent, another quantity which appears occasionally
is its right-inverse
\[
  \Phi(q) = \sup\{ s \ge 0: \psi(s) = q\}.
\]
Associated with the spectrally negative Lévy process $X$ are its so-called
$q$-scale functions, given for any $q \ge 0$.
\revised{We describe these in some detail here because they form part of the expression
of the optimal value function.}
The first is a right-continuous
function $W^{(q)} \from \RR \to [0,\infty)$,
whose Laplace transform is given by
\[
  \int_0^\infty e^{-s x} W^{(q)}(x)\, \dd x
  = \frac{1}{\psi(s) - q}, \quad s>\Phi(q),
\]
setting $W^{(q)}(x) = 0$ for $x< 0$. With 
\begin{equation*}
  \overline W^{(q)}(x)=\int_0^x W^{(q)}(y)\mathrm d y, \quad x\geq 0,
\end{equation*}
the second is defined
as
\[
  Z^{(q)}(x) = 
  \begin{cases}
    1 + q \overline W^{(q)}(x) , & x \ge 0, \\
    1, & x < 0.
  \end{cases}
\]
Finally, it will be notationally convenient to also define
an antiderivative of $Z$,
\[
  \overline{Z}^{(q)}(x) 
  =
  \begin{cases}
    \int_0^x Z^{(q)}(y) \, \dd y, & x \ge 0, \\
    x, & x < 0.
  \end{cases}
\]
The scale function $W^{(q)}$ is a strictly increasing on $[0,\infty)$ and if
$X$ has paths of bounded variation, then $W^{(q)}$ is continuous on
$\mathbb R\backslash\{0\}$, whereas if $X$ has paths of unbounded variation, then
$W^{(q)}$ is continuous on $\mathbb R$ and continuously differentiable on
$\mathbb R\backslash\{0\}$. 
Further, if $q>0$, then
\begin{equation}\label{limit_ratio_scale}
  \lim_{x\to\infty} \frac{Z^{(q)}(x)}{W^{(q)}(x)} = \frac q{\Phi(q)},
\end{equation}
see \cite[Lemma~3.3]{KKR-scale},
For more background information on spectrally negative L\'evy processes and
their scale functions, \revised{including their numerical computation,}
we refer to \cite[Chapter 8]{Kyp2} and \cite{KKR-scale}.

\subsection{Definition and functionals of natural tax processes with minimal bailouts}
\label{sec:natural}

\revised{
We now give a brief description of the process which will form the controlled process under the optimal strategy, postponing a full discussion with proofs until Sections \ref{sec:tax-reflection} and \ref{sec:construction}.
Suppose that $\delta \colon [0,\infty) \to [0,1)$ is a measurable function,
such that the ordinary differential equation
\begin{equation}
  y'(t) = 1-\delta(y(t)), \quad t\ge 0, \qquad y(0) = y_0,
  \label{e:ode-a}
\end{equation}
has a solution for every $y_0\ge 0$, in the sense that there is an absolutely continuous
function $y$ for which \eqref{e:ode-a} holds for almost every $t$.
The relevance of this differential equation arises from \cite[Theorem 1]{ALW-tax-equiv}.
Equation \eqref{e:ode-a} has unique solutions when $\delta$ is increasing
\cite[Example 2]{ALW-tax-equiv}, and this will be the case for our candidate solutions.
Let $x\in\RR$ and $\bar{x}\ge x \vee 0$.
We work under the measure $\PP_{x,\bar{x}}$ given in the introduction,
under which $X$ is a spectrally negative Lévy process with
$(X_0, \overline{X}_0) = (x,\bar{x})$
Recall also the notation \eqref{e:overlineY} for running maxima,
which depend on the variable $\bar x$.
For such $\delta$, we will show in Section \ref{sec:construction} that there exists
a pair $(V,K)$ of stochastic processes, with the property that, almost surely,
$V$ possesses left and right limits at every time; $K$ is left-continuous;
$(V_0, \overline{V}_0, K_0) = (x,\bar{x},0)$; $V_{t+}\ge 0$ for all $t\ge 0$;
and 
\begin{equation}\label{VK_eq_revised}
\begin{split}
	V_t = & X_t + K_t - \int_{0^+}^t \delta(V_s) \, \dd (\overline{X+K})_s  \quad t\geq 0, \\
	0 = & \int_{0}^\infty \Indic{V_t >0} \, \dd K_{t+} 
\end{split}
\end{equation}
Moreover, $(V,K)$ is adapted to the filtration $(\Ff_t)_{t\geq 0}$, and
the bivariate process $(V,\overline{V})$ is strong Markov in $(\Ff_t)_{t\geq 0}$.
We call the process $V$, or the process $(V,K)$, the \emph{natural tax process
with minimal bailouts}, with the \emph{(natural) tax rate} $\delta$.
The interpretation of \eqref{VK_eq_revised} is that when $V$ is drifting
at its running maximum at level $v$, a proportion $\delta(v)$ is taken from
the (uncontrolled) increase, whereas increases in $K$ correspond to bailouts which  cannot take place when $V_t>0$ and are there to ensure that the condition $V_{t+}\ge 0$ is met.
}

As outlined in the introduction, the solution of the optimal control 
problem is such a tax process, with a specific threshold
tax rate. 
The main result that we need on this type of process is a description of its
value function. We will use this in the next section, varying
the threshold, to obtain the solution of the control problem.
For $\gamma \in [0,1)$ and $x\geq 0$, define 
\begin{equation}\label{def_Rgam}
  R_\gamma(x) = 
  \begin{cases}
    \frac\gamma{1-\gamma} Z^{(q)}(x)^{\frac1{1-\gamma}}\int_x^\infty Z^{(q)}(y)^{-\frac1{1-\gamma}}(1-\eta Z^{(q)}(y))\mathrm d y & \text{if $\gamma\in(0,1)$}, \\
    0 & \text{if $\gamma=0$},
  \end{cases}
\end{equation}
noting that, when $\gamma>0$, the integral above converges because, as $q>0$,
there exists $s>0$ such that $\lim_{x \to\infty} e^{-s x} Z^{(q)}(x) =\infty$,
see \eqref{lowerbound_Zq} below. 

The condition $\int_1^\infty \theta \, \nu(\dd\theta)<\infty$ appearing in the following
result is equivalent to the condition $\EE[X_1] = \psi'(0) > -\infty$, and since the
latter quantity appears in the value function, we need it to be finite.
\revised{In the absence of this condition, the value of bailouts would be infinite.}
\begin{proposition}\label{p:value}
  Assume that $\int_1^\infty \theta\,\nu(\dd \theta) < \infty$.
  Let $(V,K)$ be
  the natural tax process with minimal bailouts with tax rate given by the
  threshold tax rate $\delta_b$ with threshold level $b\geq 0$, which is
  defined by
  \begin{equation*}
    \delta_b(z) 
    = \alpha\mathbf{1}_{\{z\le b\}} + \beta\mathbf{1}_{\{z>b\}},
    \quad z\geq 0.
  \end{equation*}
  Then for $(x,\bar x)\in\mathbb R\times[0,\infty)$ such that $x\leq \bar x$ and for $\eta\geq 0$,
  \revised{the control $\pi = (\delta_b(V),K)$ is admissible and}
  \begin{align}
    \revised{v^\pi(x,\bar{x})}
    &=
    \EE_{x,\bar{x}}
    \Biggl[
      \int_0^\infty e^{-qs} \delta_b(V_s) \, \dd (\overline{X+K})_s
      -
      \eta
      \int_0^\infty e^{-qs} \, \dd K_s^+
    \Biggr] \nonumber \\
    &= \eta \left( \overline Z^{(q)}(x)+\frac{\psi'(0)}q \right) \nonumber \\
    & \quad {} + \frac{Z^{(q)}(x)}{Z^{(q)}(\bar x)} \left\{ R_\alpha(\bar x) + \left(  \frac{Z^{(q)}(\bar x)}{Z^{(q)}(\bar x\vee b)} \right)^{\frac1{1-\alpha}} ( R_\beta(\bar x \vee b) - R_\alpha(\bar x\vee b) )  \right\}.
    \label{e:vtaxinj}
  \end{align}
\end{proposition}
Note that \eqref{e:vtaxinj} does not hold in the (excluded) case $\alpha=\beta=0$, which corresponds to having no taxation at all. 
Proposition~\ref{p:value} will be proved in Section~\ref{s:containing-p:value}, after we have established Lemma~\ref{l:char}, which gives a characterisation of the value function. We only mention for now that $\delta_b$ satisfies the conditions in Theorem \ref{p:V}, see \cite[Example 2]{ALW-tax-equiv}.

\section{Solution of the optimal control problem}
\label{s:control}

\subsection{Main result}\label{sec_mainresult}

Before stating the main result we need one more piece of notation:
\begin{equation}\label{def_CQ}
  \begin{split}
    C(b) & =  Z^{(q)}(b)^{-\frac1{1-\alpha}} ( R_\beta(b) - R_\alpha(b) ) , \\
    Q(b) & =  Z^{(q)}(b)^{-\frac1{1-\alpha}} \left( \frac{Z^{(q)}(b)}{Z^{(q)\prime}(b)} (1-\eta Z^{(q)}(b))  - R_\alpha(b)   \right), \\
    b^* & =  \inf \left\{ b>0 : R_\beta(b) < \frac{Z^{(q)}(b)}{Z^{(q)\prime}(b)} (1-\eta Z^{(q)}(b)) \right\} \\
    & =  \inf \left\{ b>0 : C(b) < Q(b) \right\} , 
  \end{split}
\end{equation}
where $R_\gamma(\cdot)$ was defined in \eqref{def_Rgam}.
\revised{Next we present our main result as well as a lemma and a follow-up remark containing some properties of $b^*$. The proof of the  main result is postponed to Section \ref{sec_proof_mainthm}.}
\begin{theorem}\label{t:control}
  Assume that $\int_1^\infty\theta  \, \nu(\dd \theta) < \infty$ and that $\eta\ge 1$.
  Then, $b^*<\infty$ and for each $(x,\bar x)\in\mathbb R\times[0,\infty)$ with $x\leq \bar x$ an optimal control
  for \eqref{e:control} is given by the pair $(\delta_{b^*}(V),K)$ where $(V,K)$ is the natural tax process with minimal bailouts and threshold tax rate $\delta_{b^*}$. The optimal value function $v^*(x,\bar x)$ 
  is expressed by the right hand side of \eqref{e:vtaxinj} with $b=b^*$. 
\end{theorem}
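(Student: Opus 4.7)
The plan is to sandwich $v^*(x,\bar x)$: the lower bound comes from \cref{p:value} applied to the candidate strategy $\pi^*:=(\delta_{b^*}(V),K)$, and the upper bound from the verification \cref{l:verif} applied to the candidate function $v_{b^*}\colon\mathbb R\times[0,\infty)\to\mathbb R$, defined as the right-hand side of \eqref{e:vtaxinj} with $b=b^*$. First I would establish $b^*<\infty$. Under $\eta\geq 1$ the integrand $1-\eta Z^{(q)}(y)$ in \eqref{def_Rgam} is non-positive on $[0,\infty)$ since $Z^{(q)}\geq 1$ there; using $(Z^{(q)})'=qW^{(q)}$, \eqref{limit_ratio_scale}, and the scale-function asymptotics $W^{(q)}(y)\sim ce^{\Phi(q)y}$, both sides of the inequality $R_\beta(b)<\frac{Z^{(q)}(b)}{(Z^{(q)})'(b)}(1-\eta Z^{(q)}(b))$ are asymptotically equivalent to $-\eta Z^{(q)}(b)/\Phi(q)$, but tracking the subleading terms carefully shows that the right-hand side is ultimately smaller, so the inequality holds for $b$ large enough. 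Admissibility of $\pi^*$ in the sense of \cref{def_admis} is then verified: items \ref{item_admis_adapt}--\ref{item_admis_cont} and \ref{item_admis_pos} are immediate from \cref{prop_markov}, \cref{p:V} and \cref{d:VK}, and item \ref{item_admis_expcap} follows by comparing \eqref{e:vtaxinj} at two values of $\eta$ to isolate $\mathbb E\int_0^\infty e^{-qs}\dd K_s^+$. Hence \cref{p:value} gives $v^{\pi^*}(x,\bar x)=v_{b^*}(x,\bar x)$, so $v^*\geq v_{b^*}$.

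For the upper bound I apply \cref{l:verif} to $v_{b^*}$. The regularity decomposition $v_{b^*}(x,\bar x)=\sum_{i=1}^2 g_i(x)h_i(\bar x)$ and the required smoothness of each factor were already established in the proof of \cref{p:value}. Condition \ref{itemverif_formbelow0} is immediate from the definitions of $Z^{(q)}$ and $\overline Z^{(q)}$ on $(-\infty,0)$. Condition \ref{itemverif_genineq} reduces to the standard identities $\mathcal A_q Z^{(q)}\equiv 0$ and $\mathcal A_q\overline Z^{(q)}\equiv \psi'(0)$ on $(0,\infty)$; the constant term $\eta\psi'(0)/q$ in $v_{b^*}$ is introduced precisely to absorb the latter. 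Condition \ref{itemverif_bound} follows by inspection since scale functions are locally bounded. For condition \ref{itemverif_derivbound}, $\partial_x v_{b^*}(x,\bar x)\leq \eta$, I would compute $\partial_x v_{b^*}$ explicitly and reduce it to a scale-function inequality that, using $\eta\geq 1$ and monotonicity of $W^{(q)}$, holds on $\{0\leq x\leq\bar x\}$.

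The main obstacle is condition \ref{itemverif_pdeineq}. Because the expression $\gamma\mapsto \gamma\partial_x v_{b^*}(\bar x,\bar x)+(\gamma-1)\partial_{\bar x}v_{b^*}(\bar x,\bar x)-\gamma$ is affine in $\gamma$, it suffices to test at the endpoints $\gamma=\alpha$ and $\gamma=\beta$. By construction — taking $a\to\infty$ in the family satisfying condition \ref{itemchar_pdeeq} of \cref{l:char} — equality holds at the ``active'' endpoint: $\gamma=\alpha$ on $[0,b^*]$ and $\gamma=\beta$ on $(b^*,\infty)$. Using the identity \eqref{Rgamderiv} to compute the relevant derivatives of $v_{b^*}$, the two remaining inequalities (at $\gamma=\beta$ on $[0,b^*]$ and at $\gamma=\alpha$ on $(b^*,\infty)$) translate into sign conditions on $C(\bar x)-Q(\bar x)$ and on its $(\alpha,\beta)$-swapped analogue; the definition of $b^*$ is tailored to give the correct sign on $[0,b^*]$. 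The delicate point is ensuring that the sign does not reverse on $(b^*,\infty)$, which requires showing that $C-Q$ remains on the correct side beyond its first crossing; this will use monotonicity properties of $Z^{(q)}$, $W^{(q)}$ and the hypothesis $\eta\geq 1$, exploiting that on $(b^*,\infty)$ one tests at $\gamma=\alpha\leq\beta$, i.e.\ at the endpoint farther from the active threshold. Once all five conditions are in place, \cref{l:verif} yields $v^*\leq v_{b^*}$, and combining with the lower bound gives $v^*=v_{b^*}$ together with optimality of $\pi^*$, as claimed.
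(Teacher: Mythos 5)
Your overall architecture — sandwich $v^*$ between the value of the candidate strategy (via Proposition~\ref{p:value}) and the upper bound from the verification Lemma~\ref{l:verif} — is exactly the paper's approach, and your observation that condition~\ref{itemverif_pdeineq} is affine in $\gamma$ with equality built in at the ``active'' endpoint is the right way to organise the analysis. However, there are substantive gaps where you acknowledge a difficulty without resolving it, and these are precisely where the real work of the proof lies.

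The central missing ingredient is what the paper isolates as Lemma~\ref{lem_CQ}: (i) $b^*<\infty$, (ii) $C'>0$ and $C>Q$ on $(0,b^*)$, (iii) $C'<0$ and $C<Q$ on $(b^*,\infty)$. You flag the ``delicate point'' of ensuring $C-Q$ does not re-cross on $(b^*,\infty)$, but you give no mechanism for this, and your proposed route to $b^*<\infty$ (both sides asymptotic to $-\eta Z^{(q)}(b)/\Phi(q)$, then ``track subleading terms'') is not what the paper does and is far from routine to push through — both $C$ and $Q$ converge to the same limit at infinity, so the comparison is genuinely second-order. The paper's proof instead rewrites $Q$ via integration by parts and identity~\eqref{avrametal} as an integral against $\mathbb E_{0,0}[e^{-q\hat\tau_y}]-1/\eta$, deduces that $Q$ is increasing-then-constant-then-decreasing, couples this with the first-order ODE~\eqref{ODE_C} for $C$ and the common limit at infinity to conclude $C<Q$ eventually, and finally rules out a plateau $\{C=Q\}$ on $(0,b^*)$ by a Grönwall contradiction. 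None of this is suggested by ``monotonicity of $Z^{(q)}$, $W^{(q)}$ and $\eta\ge 1$,'' and without it both $b^*<\infty$ and conditions~\ref{itemverif_pdeineq},~\ref{itemverif_derivbound} remain unproved.

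Two further points. First, condition~\ref{itemverif_bound} is \emph{not} ``by inspection since scale functions are locally bounded'': the domain $\{0\le x\le\bar x\}$ is unbounded, $Z^{(q)}$ and $\overline Z^{(q)}$ blow up exponentially, and the paper must prove boundedness of $v$ by first handling $\eta=0$ via l'H\^opital and~\eqref{limit_ratio_scale} and then comparing the general $\eta$ case to it through the bailout term. Second, for condition~\ref{itemverif_derivbound} the scale-function inequality you would need is the strict log-concavity of $\overline W^{(q)}$ on $(0,\infty)$, which the paper again derives from~\eqref{avrametal}; plain monotonicity of $W^{(q)}$ is not enough. In short: the skeleton is right, but Lemma~\ref{lem_CQ} and the boundedness/log-concavity arguments are the heart of the theorem, and the proposal does not supply them.
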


 Using the analogue of \eqref{Rgamderiv} for $R_\gamma$, we can derive the following ODE for $C(\cdot)$,
\begin{equation}\label{ODE_C}
  C'(b) = \left( \frac1{1-\beta} - \frac1{1-\alpha} \right) \frac{Z^{(q)\prime}(b)}{Z^{(q)}(b)} (C(b)-Q(b)), \quad b>0.
\end{equation}
Equation \eqref{e:vtaxinj} suggests that the threshold level that yields the highest value function amongst the threshold tax rate with minimal bailouts strategies is the one that maximises the function $C(\cdot)$. The next lemma shows that $b^*$ is the unique maximiser of this function. 
\begin{lemma}\label{lem_CQ}
  We have (i) $b^*<\infty$, (ii) $C'(b)>0$ and $C(b)>Q(b)$  for $b\in(0,b^*)$ and  (iii) $C'(b)<0$ and $C(b)<Q(b)$ for $b\in(b^*,\infty)$. 
\end{lemma}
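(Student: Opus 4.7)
The plan is to exploit the ODE \eqref{ODE_C} together with the asymptotics of the $q$-scale functions. The case $\alpha=\beta$ is handled separately: there $R_\beta\equiv R_\alpha$, so $C\equiv 0$, and the claims reduce to showing that $Q$ changes sign exactly once from nonpositive to positive, which can be read off its explicit form. So assume $\alpha<\beta$, whence $\kappa:=\tfrac{1}{1-\beta}-\tfrac{1}{1-\alpha}>0$ and \eqref{ODE_C} reads $C'(b)=\kappa h(b)F(b)$ with $h(b):=Z^{(q)\prime}(b)/Z^{(q)}(b)>0$ and $F(b):=C(b)-Q(b)$; hence $\operatorname{sign}(C'(b))=\operatorname{sign}(F(b))$, which immediately supplies the equivalence within each of (ii) and (iii) between the $C'$-inequality and the $C,Q$-inequality. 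The remaining content of the lemma is that $F$ has exactly one zero in $(0,\infty)$, is positive to its left, and is negative to its right.

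For (i), I would establish the asymptotic $R_\gamma(b)\sim(\gamma-\eta Z^{(q)}(b))/\Phi(q)$ as $b\to\infty$ for each $\gamma\in[0,1)$, which is a short calculation from the leading-order estimate $Z^{(q)}(b)\sim qe^{\Phi(q)b}/(\Phi(q)\psi'(\Phi(q)))$ plugged into the integral defining $R_\gamma$. Substituting back into the definitions of $C$ and $Q$ yields
\[
C(b)\sim\frac{\beta-\alpha}{\Phi(q)\,Z^{(q)}(b)^{1/(1-\alpha)}},\qquad Q(b)\sim\frac{1-\alpha}{\Phi(q)\,Z^{(q)}(b)^{1/(1-\alpha)}},
\]
so that $F(b)\sim-(1-\beta)/(\Phi(q)Z^{(q)}(b)^{1/(1-\alpha)})<0$ for all sufficiently large $b$, which gives $b^*<\infty$.

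For the uniqueness of the zero of $F$, I would work with $f(b):=Z^{(q)}(b)^{-\kappa}F(b)$, which has the same sign as $F$, satisfies $f(\infty)=0$, and (using $1/(1-\alpha)+\kappa=1/(1-\beta)$) admits the explicit form
\[
f(b)=\tfrac{\beta}{1-\beta}\int_b^\infty Z^{(q)}(y)^{-1/(1-\beta)}(1-\eta Z^{(q)}(y))\,\dd y-\tfrac{(1-\eta Z^{(q)}(b))Z^{(q)}(b)^{-\beta/(1-\beta)}}{Z^{(q)\prime}(b)},
\]
with derivative $f'(b)=Z^{(q)}(b)^{-\beta/(1-\beta)}[\eta+Z^{(q)\prime\prime}(b)(1-\eta Z^{(q)}(b))/Z^{(q)\prime}(b)^2]$. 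Differentiating $Q$ directly (using \eqref{Rgamderiv} to handle the $R_\alpha$ term) produces the companion identity $Q'(b)=-Z^{(q)}(b)^{\kappa}f'(b)$, so that at any zero $b_0>0$ of $F$ the ODE gives $C'(b_0)=0$ and hence $F'(b_0)=-Q'(b_0)=Z^{(q)}(b_0)^{\kappa}f'(b_0)$. It therefore suffices to show $f'(b_0)<0$ at every zero of $F$: a strictly downward crossing rules out additional zeros, and combined with the asymptotic $F(b)<0$ for large $b$ yields the unique $b^*$, while a tangent zero in $(0,b^*)$ would give $F'=0$ there, contradicting $f'<0$.

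The hardest part of the plan is this final negativity of $f'(b_0)$. My approach is to use the constraint $f(b_0)=0$, which reads $\tfrac{\beta}{1-\beta}\int_{b_0}^\infty Z^{(q)}(y)^{-1/(1-\beta)}(1-\eta Z^{(q)}(y))\,\dd y=(1-\eta Z^{(q)}(b_0))Z^{(q)}(b_0)^{-\beta/(1-\beta)}/Z^{(q)\prime}(b_0)$, and substitute it into the numerator $\eta Z^{(q)\prime}(b_0)^2+(1-\eta Z^{(q)}(b_0))Z^{(q)\prime\prime}(b_0)$ of $f'(b_0)$; then combine with $\eta\geq 1$ and the convexity of $Z^{(q)}$ (which gives $Z^{(q)\prime\prime}\geq 0$) to obtain the required negative sign.
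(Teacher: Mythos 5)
Your reduction via the ODE \eqref{ODE_C}, the $\alpha$-free auxiliary function $f(b)=Z^{(q)}(b)^{-\kappa}(C(b)-Q(b))$, the explicit formula for $f$, and the computation of $f'$ are all correct, and in fact equivalent to the paper's maneuver: the paper works with $Q$ and its derivative, and your identity $Q'(b)=-Z^{(q)}(b)^{\kappa}f'(b)$ shows the two bookkeeping choices coincide. The difference in outcome is in the final step, which is where your argument has a genuine gap.

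You propose to deduce $f'(b_0)<0$ at any zero $b_0$ of $f$ by substituting the constraint $f(b_0)=0$ into the numerator of $f'(b_0)$ and then using only $\eta\geq 1$ and the convexity of $Z^{(q)}$ (i.e.\ $Z^{(q)\prime\prime}\geq 0$). This cannot work. By the identity \eqref{avrametal} one has
\[
\eta Z^{(q)\prime}(b)^2+(1-\eta Z^{(q)}(b))Z^{(q)\prime\prime}(b)
= Z^{(q)\prime\prime}(b)\bigl(1-\eta\,\EE_{0,0}[e^{-q\hat\tau_b}]\bigr),
\]
so the sign of $f'(b)$ is the sign of $1-\eta\,\EE_{0,0}[e^{-q\hat\tau_b}]$, which increases from $1-\eta\leq 0$ to $1$ as $b$ runs from $0$ to $\infty$. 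Whether this is negative at a particular $b_0$ depends on where $b_0$ sits relative to the point where $\EE_{0,0}[e^{-q\hat\tau_b}]$ crosses $1/\eta$; convexity of $Z^{(q)}$ says nothing about this. The paper's proof supplies exactly the missing ingredient: \eqref{avrametal} together with the elementary probabilistic fact that $\hat\tau_a$ (hence $\EE_{0,0}[e^{-q\hat\tau_a}]$) is monotone in $a$, which gives the unimodality of $Q$ and equivalently that $f'$ changes sign at most once (from $\leq 0$ to $\geq 0$). Once you have that, the argument you want (that $f(\infty)=0$ forces any zero to lie in the region where $f'<0$, hence is a strict downward crossing and is unique) goes through — but it is the monotonicity of $\EE_{0,0}[e^{-q\hat\tau_b}]$, not convexity of $Z^{(q)}$, that drives it. Your statement is even concretely contradicted at $\eta=1$: there $1-\eta\EE_{0,0}[e^{-q\hat\tau_b}]>0$ for all $b>0$, so $f'>0$ everywhere; the claim "$f'(b_0)<0$ at every zero" only survives because $f$ has no zero there, which itself follows from the unimodality/monotonicity you have not yet established.

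A secondary, fixable issue concerns the asymptotics in your proof of (i). Writing $R_\gamma(b)\sim(\gamma-\eta Z^{(q)}(b))/\Phi(q)$ only captures the leading order, which cancels in both $R_\beta-R_\alpha$ and $\tfrac{Z^{(q)}}{Z^{(q)\prime}}(1-\eta Z^{(q)})-R_\alpha$; to conclude $C(b)\sim(\beta-\alpha)/(\Phi(q)Z^{(q)}(b)^{1/(1-\alpha)})$ and similarly for $Q$ you would need control on the next-order remainder, which is not automatic. Indeed your claimed asymptotic for $Q$ already fails at $\alpha=0$, where $Q(b)\to-\eta/\Phi(q)\neq 0$. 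The paper avoids this by only using $\lim_{b\to\infty}C(b)=\lim_{b\to\infty}Q(b)$ (via l'H\^opital and \eqref{limit_ratio_scale}) together with the eventual strict monotonicity of $Q$ and the ODE. In your framing the cleaner route to $b^*<\infty$ is to note that $f'(y)>0$ for all $y$ large (again from \eqref{avrametal} and monotonicity of $\EE_{0,0}[e^{-q\hat\tau_y}]$), so $f(b)=-\int_b^\infty f'(y)\,\dd y<0$ for large $b$, without any scale-function asymptotics.
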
	
\begin{proof}
  We will first prove a monotonicity property for the function $Q(\cdot)$ using arguments inspired 
  by pp.~15--16 in \cite{APP2008}.
  We will need the following identity which follows from  Theorem~1 in \cite{AKP-exit}:
  \begin{equation}\label{avrametal}
    \mathbb E_{0,0} \left[ e^{-q \hat\tau_a} \right] =  Z^{(q)}(a) - W^{(q)}(a)\frac{q W^{(q)}(a)}{W^{(q)\prime}(a)}
    =  Z^{(q)}(a) -  \frac{Z^{(q)\prime}(a)^2}{Z^{(q)\prime\prime}(a)},
  \end{equation}
  where $\hat\tau_a \coloneqq \inf\{t\geq 0: \overline X_t - X_t>a\}$, 
  $Z^{(q)\prime\prime}(y)=q W^{(q)\prime}(y)$
  and $W^{(q)\prime}(y)$ denotes the right-derivative of $W^{(q)}(y)$.
  Assume initially that $\alpha>0$.
  We have by an integration by parts and \eqref{avrametal},
  \begin{equation*}
    \begin{split}
      Q(b) = & \int_b^\infty  Z^{(q)}(y)^{-\frac\alpha{1-\alpha}}   \left( \eta  + \frac{Z^{(q)\prime\prime}(y)}{Z^{(q)\prime}(y)^2}(1-\eta Z^{(q)}(y)) \right) \mathrm d y \\
      = & -\eta \int_b^\infty  Z^{(q)}(y)^{-\frac\alpha{1-\alpha}} \frac{Z^{(q)\prime\prime}(y)}{Z^{(q)\prime}(y)^2}   \left( \mathbb E_{0,0} \left[ e^{-q \hat\tau_y} \right] - \frac1\eta \right) \mathrm d y,
    \end{split}
  \end{equation*}	
  Since $\mathbb E_{0,0} \left[ e^{-q \hat\tau_a} \right]$ is decreasing in $a$,
  it follows that there exist $0\leq a_1\leq a_2<\infty$ such that $Q(\cdot)$ is strictly increasing on $(0,a_1)$, constant on $(a_1,a_2)$ and strictly decreasing on $(a_2,\infty)$. 
  If on the other hand $\alpha=0$, then we have for a fixed $a\in(0,\infty)$,
  \begin{equation*}
    Q(b) = Q(a) -\eta \int_b^a  \frac{Z^{(q)\prime\prime}(y)}{Z^{(q)\prime}(y)^2}   \left( \mathbb E_{0,0} \left[ e^{-q \hat\tau_y} \right] - \frac1\eta \right) \mathrm d y
  \end{equation*}
  and so we can use the same argument as in the $\alpha>0$ case to deduce the same monotonicity property for $Q(\cdot)$.

  Note that by \eqref{ODE_C}, $C'(b)>0$ if and only if $C(b)>Q(b)$ and  $C'(b)<0$ if and only if $C(b)<Q(b)$.
   Using \eqref{limit_ratio_scale} together with
  l'H\^opital's rule in \eqref{def_CQ} yields
  \[ 
    \lim_{b\to\infty}C(b)=\lim_{b\to\infty}Q(b) 
    = 
    \begin{cases} 
      0 & \text{if $\alpha>0$} \\ 
      -\eta/\Phi(q) & \text{if $\alpha=0$.}
    \end{cases}
  \]
  Because $Q(\cdot)$ is strictly decreasing on $(a_2,\infty)$ it follows that $C(b)<Q(b)$ for $b>a_2$ as otherwise by \eqref{ODE_C}, $C(\cdot)$ will be from some point onwards increasing and thus larger than $Q(\cdot)$ and then $C(\cdot)$ and $Q(\cdot)$ do not converge to the same limit at infinity. 
  Consequently,  $b^*\leq a_2<\infty$ and $C(b)<Q(b)$ for all $b>b^*$ since, assuming without loss of generality $b^*<a_2$, $Q(b)\geq Q(b^*)$ for $b\in(b^*,a_2)$ and $C'(b)<0$ if $C(b)<Q(b)$.  It remains to show that $C(b)> Q(b)$ for $b\in(0,b^*)$ which we prove by contradiction. Suppose this does not hold. Since by definition of $b^*$, $C(b)\geq Q(b)$ for $b\in(0,b^*)$, we must have then that there exists $0<b_1<b_2\leq b^*$ such that $C(b)=Q(b)$ for all $b\in[b_1,b_2]$. Consequently, by \eqref{ODE_C}, $C'(b)=0$ and thus $Q'(b)=0$ for all $b\in(b_1,b_2)$. Therefore, $Q(\cdot)$ is decreasing on $(b_1,\infty)$ by the monotonicity property of $Q(\cdot)$.
  So by \eqref{ODE_C} and recalling $0\leq\alpha<\beta<1$, we have for $b>b_1$,
  \begin{equation*}
    \begin{split}
      C'(b) &= C'(b)+ \left( \frac1{1-\beta} - \frac1{1-\alpha} \right) \frac{Z^{(q)\prime}(b)}{Z^{(q)}(b)}(Q(b_1) - C(b_1)) \\
      & = \left( \frac1{1-\beta} - \frac1{1-\alpha} \right) \frac{Z^{(q)\prime}(b)}{Z^{(q)}(b)} (C(b)-C(b_1)-(Q(b)-Q(b_1)) \\
      & \geq \left( \frac1{1-\beta} - \frac1{1-\alpha} \right) \frac{Z^{(q)\prime}(b)}{Z^{(q)}(b)} (C(b)-C(b_1)).
    \end{split}
  \end{equation*}
  Hence by Gr\"onwall's inequality, $C(b)-C(b_1)\geq 0$ for all $b>b_1$ and thus $C(b)\geq C(b_1)=Q(b_1)\geq Q(b)$ for all $b>b_1$ which forms a contradiction with $b^*<\infty$.
\end{proof}
\begin{remark}\label{remark_propb*}
  From \eqref{def_CQ}, Lemma \ref{lem_CQ} and its proof we can easily deduce
  the following properties of $b^*$. 
  \begin{enumerate}
    \item
      If $\eta=1$, then $Q(b)$ is a strictly decreasing function on
      $(0,\infty)$ and then so must $C(b)$ be, which implies $b^*=0$. 
    \item
      If $\eta>1$ and $X$ has paths of unbounded variation, then $W^{(q)}(0)=0$
      and so
      $\lim_{b\downarrow 0}\frac{Z^{(q)}(b)}{Z^{(q)\prime}(b)} (1-\eta Z^{(q)}(b))=-\infty$,
      whereas $R_\beta(0)\in\mathbb R$, which implies $b^*>0$. 
    \item
      Further, if $b^*>0$ then it must be the unique point $b\in(0,\infty)$
      such that $C(b)=Q(b)$ or equivalently
      $R_\beta(b)=\frac{Z^{(q)}(b)}{Z^{(q)\prime}(b)} (1-\eta Z^{(q)}(b))$. We
      also remark that from the definition of $b^*$ we see that it does not
      depend on the lower tax rate bound $\alpha$.
  \end{enumerate}
\end{remark}

\subsection{Example}\label{sec_example}

In this section, we consider a specific example in which $X$ is a Lévy process
with Laplace exponent $\psi$ given by
\[
  \psi(s) = (1+\theta)\frac{2\lambda}{\mu} s
  + \frac{1}{2}\sigma^2 s^2
  + \lambda \int_0^\infty (e^{-sx} -1) \mu x e^{-\mu x} \, \dd x,
  \qquad s\ge 0,
\]
meaning that the process experiences negative jumps whose magnitude has a gamma
(or Erlang) distribution with shape parameter $2$ and rate $\mu>0$.  Jumps
arrive at rate $\lambda>0$, and the process also experiences Gaussian
fluctuations with volatility $\sigma>0$.  The parameter $\theta\in \RR$ is the
loading factor, and $\EE_0 X_1 = \psi'(0) = 2\theta\lambda/\mu$.
Unless otherwise mentioned, we will take $\lambda=1$, $\mu=2$, $\theta=0.1$ and
$\sigma=1$ for the parameters associated with $X$, and $\alpha=0.3$,
$\beta=0.6$ and $\eta = 1.25$ for those associated with taxation.

Our goal is to investigate numerically the effect of changing the parameters
$\alpha$, $\beta$, $\eta$ and $q$ on the optimal threshold $b^*$ and the value
function $v$.  Computation of both of these requires evaluating not just the
scale functions of the models in question, but also the function $R_\gamma$
defined in \eqref{def_Rgam}.
This function contains an integral over an unbounded domain, and established
quadrature routines may struggle with it. Therefore, we first show that the
integrand in question decays fast and hence show how to approximate it with a
finite integral.

Taking \cite[Corollary 8.9]{Kyp2} and integrating yields the identity
\[
  \overline{W}^{(q)}(x)
  = \frac{\Phi'(q)}{\Phi(q)} (e^{\Phi(q)x}-1) 
  - \int_0^\infty e^{-qt} \PP(X_t \in [-x,0]) \, \dd t,
  \qquad x\ge 0.
\]
and bounding the probability by $0$ or $1$ gives a simple
bound for $\overline{W}^{(q)}(x)$ in terms of $\Phi$
and its derivative. In particular, it implies that
\begin{equation}\label{lowerbound_Zq}
  Z^{(q)}(x) \ge q\frac{\Phi'(q)}{\Phi(q)}(e^{\Phi(q)x}-1)
\ge q\frac{\Phi'(q)}{\Phi(q)}e^{(\Phi(q)-\delta)x},
\qquad
x \ge \delta^{-1}\log 2,
\end{equation}
for any $0<\delta<\Phi(q)$.
Applying this and similar arguments, we can obtain a bound for the
integrand in the definition of $R_\gamma$ for $\gamma\in(0,1)$:
\begin{multline}
  0\le
  Z^{(q)}(x)^{-\frac{1}{1-\gamma}}
  (\eta Z^{(q)}(y)-1)
  \le
  (\eta-1)\biggl[
    \frac{\Phi(q)}{q\Phi'(q)} e^{-(\Phi(q)-\delta)x}
  \biggr]^{\frac{1}{1-\gamma}}
  \\
  {} + 
  \eta\biggl[
    \frac{\Phi(q)}{q\Phi'(q)} e^{-(\Phi(q)-\delta)x}
  \biggr]^{\frac{\gamma}{1-\gamma}},
  \qquad x\ge \delta^{-1}\log 2.
  \label{e:R-bound}
\end{multline}
Therefore, for example, if we set $\epsilon>0$ and select
\[
  M > \frac{1-\gamma}{\Phi(q)-\delta}
  \biggl[
    \log(1-\gamma) - \log(\Phi(q)-\delta)
    + \frac{1}{1-\gamma} \log\biggl(\frac{\Phi(q)}{q\Phi'(q)}\biggr)
    +\log(\eta-1)-\log \epsilon
  \biggr],
\]
then
\[ \int_M^\infty 
  (\eta-1)\biggl[
    \frac{\Phi(q)}{q\Phi'(q)} e^{-(\Phi(q)-\delta)x}
  \biggr]^{\frac{1}{1-\gamma}}
  \, \dd x < \epsilon,
\]
provided that also $M>\delta^{-1} \log 2$.  Similar considerations allow us to
bound the integral of the second term in \eqref{e:R-bound}, as well as a
similar integral appearing in the value function.  In what follows, we have
chosen $\epsilon = 10^{-10}$ and used this approximation. On top of this, the
quadrature routine that we used yielded an absolute error estimate of at most
$1.07\times 10^{-6}$.  

Previous work on computing the value function in related models appears in
\cite[Section~3]{LundbergRiskProcessWithTax} and \cite[p.~8]{IvanovsII}, the
risk model in both cases being a compound Poisson process with negative jumps
having an exponential distribution. In the former case, this leads to an
explicit expression for the value function in terms of hypergeometric and
elementary functions; however, such a nice formula does not appear to be
possible with Erlang jump distribution.

Returning to our specific example, it is readily seen that $\psi$ is a rational
function of $s$, which implies that the scale functions can be found by partial
fraction decomposition and consist of mixtures of exponentials. However, the
exact decomposition is quite complicated to write down, with both the rates and
coefficients involving roots of polynomials, and for computation, we used
Mathematica to compute the inverse Laplace transform required before importing
the result into sage.  We took a similar approach to the computation of
$\Phi(q)$, required for our analysis above, which also involves roots of a
polynomial depending on the characteristics of the process.

\begin{figure}
  \centering
  \begin{subfigure}[b]{0.4\textwidth}
    \includegraphics[width=\textwidth]{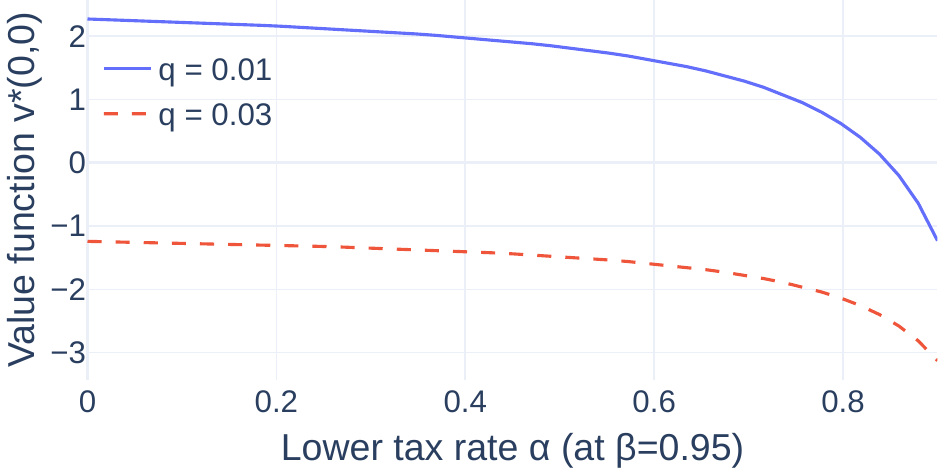}
    \label{f:alpha}
  \end{subfigure}\hfill
  \begin{subfigure}[b]{0.4\textwidth}
    \includegraphics[width=\textwidth]{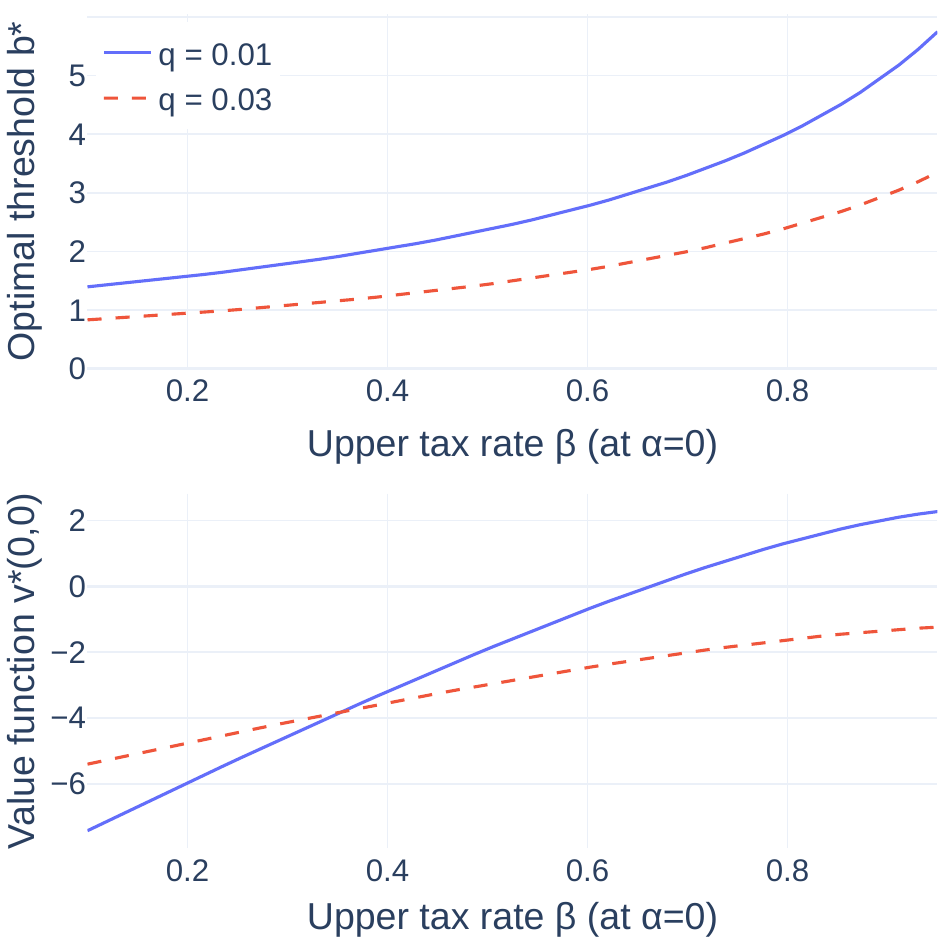}
    \label{f:beta}
  \end{subfigure}
  \caption{Effect of tax rate bounds on the optimal threshold $b^*$
    and the optimal value function at zero $v^*(0,0)$.
  The value of $\alpha$ has no effect on $b^*$.}
  \label{f:tax}
\end{figure}

\begin{figure}
  \centering
  \begin{subfigure}[b]{0.4\textwidth}
    \includegraphics[width=\textwidth]{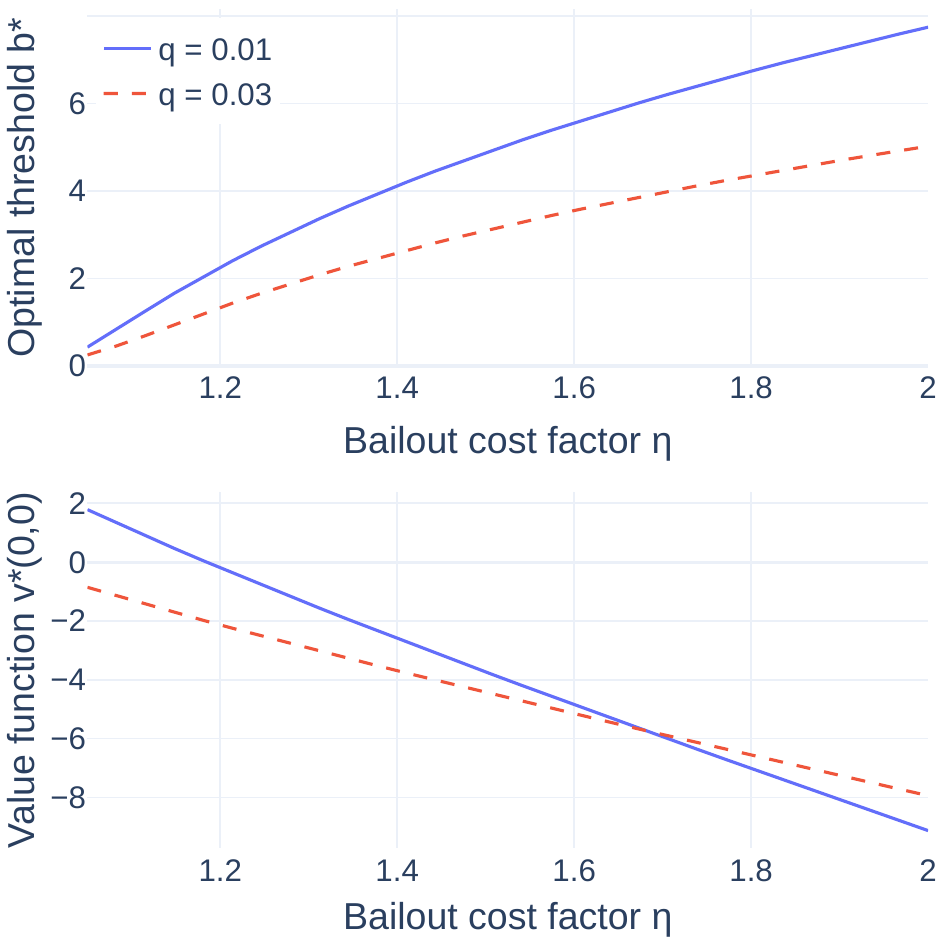}
    \label{f:eta}
  \end{subfigure}\hfill
  \begin{subfigure}[b]{0.4\textwidth}
    \includegraphics[width=\textwidth]{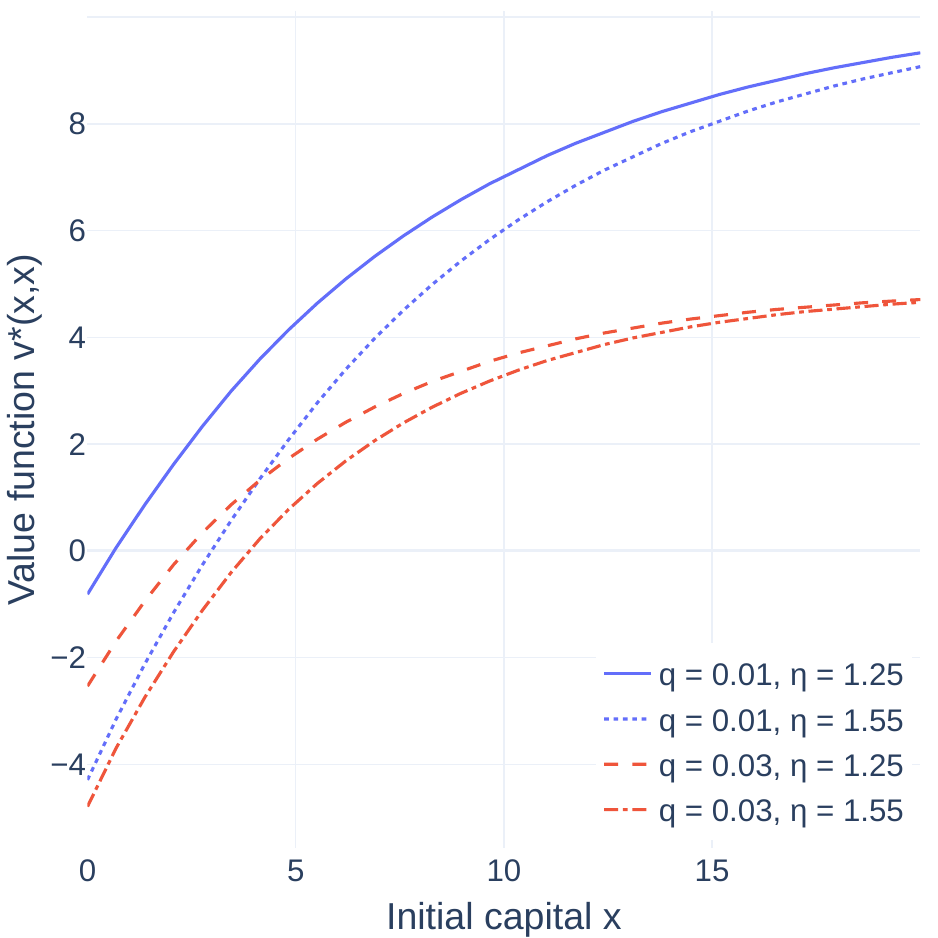}
    \label{f:v}
  \end{subfigure}
  \caption{Effect of varying $\eta$ on the optimal threshold
    $b^*$
    and the optimal value function at zero $v^*(0,0)$, and on
  the value function $v^*(x,x)$.}
  \label{f:eta-v}
\end{figure}

Figures~\ref{f:tax} and~\ref{f:eta-v} show the effect of varying the tax rates
$\alpha$ and $\beta$, and the effect of varying the bailout cost factor $\eta$,
respectively. Where the optimal value function $v^*(0,0)$ is shown, numerically
$10^{-8}$ is used for zero, since our implementations of $v^*(x,x)$ are for the
case $x>0$.
It is notable that both the threshold and the optimal value function at zero
appear to be monotone in $\eta$, and that the optimal value function is not
monotone in $q$, in that the graphs of $v^*(0,0)$ for different values of $q$
intersect. We also remark that the optimal value function $v^*(x,x)$ appears to
be concave, as may be expected from past work on the dividend model with
capital injections \cite[Lemma~3]{APP2008}.

\section{Proofs}\label{sec_proofs}

\subsection{The tax-reflection transform}
\label{sec:tax-reflection}

In this section, we make no distributional assumptions about an underlying risk
process, and instead explain rigorously how to introduce taxation and bailouts
to any càdlàg path $X = (X_t)_{t\ge 0}$. This needs particular care when the
path starts from level zero because then it is possible that taxation and
bailouts are applied simultaneously.
\revised{As we outlined earlier, a key element here is the differential equation
\begin{equation}
  y'(t) = 1-\delta(y(t)), \quad t\ge 0, \qquad y(0) = y_0,
  \label{e:ode}
\end{equation}
where $\delta\colon [0,\infty) \to [0,1)$ is a measurable function.
We call $y$
a solution if it is absolutely continuous and satisfies
\eqref{e:ode} for almost every $t$.
Recall once again the notation \eqref{e:overlineY} in which the running
maximum of paths depends on the variable $\bar x$.}

\begin{theorem}
  \label{p:V}
  Let $\delta\colon [0,\infty) \to [0,1)$ be a measurable function
  such that \eqref{e:ode} has a unique solution for every $y_0\ge 0$.
  Assume, in addition, that
  there exist some $\epsilon>0$ and $\gamma\in[0,1)$ such that 
  $\delta(x)=\gamma$ for $x\in[0,\epsilon]$. 
  For every càdlàg path $X$ with no positive jumps and for every 
  $\bar{x}\geq X_0\vee 0$, 
  there exists a unique pair of càdlàg paths $(V^+,K^+)$ with $V^+$ 
  positive and $K^+$ increasing such that 
  \begin{equation}\label{taxcondition}
    V^+_t = \ X_t + K^+_t - \int_{0^+}^t \delta(V^+_s)  \,\dd (\overline{X+K^+})_s
    , \quad t \ge 0,
  \end{equation}
  and the following complementarity condition is satisfied:
  \begin{equation}\label{compl_condition}
    \int_{0}^\infty \Indic{V^+_t >0} \, \dd K^+_t =   \ 0,
  \end{equation}
  We call $(V^+,K^+)$ the tax-reflection transform of the path $X$ with tax rate $\delta$
  and initial maximum level $\bar{x}$.
\end{theorem}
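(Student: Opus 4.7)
My approach is to cast the system \eqref{taxcondition}--\eqref{compl_condition} as a Skorokhod-type fixed-point problem that couples the natural tax transform of \cite[Theorem 1]{ALW-tax-equiv} with Skorokhod reflection at zero. I would first solve this via Banach's contraction principle on a small initial time interval, where $\delta$ is constantly equal to $\gamma$, and then extend the solution globally by a pathwise concatenation argument using excursions of $V^+$ away from zero.

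\emph{Fixed-point formulation.} On the space $\mathcal{K}_T$ of càdlàg nondecreasing functions $K\colon [0,T]\to[0,\infty)$ equipped with the supremum norm, I would define a map $\Phi$ as follows. Given $K\in\mathcal{K}_T$, apply the natural tax transform of \cite[Theorem 1]{ALW-tax-equiv} to the path $X+K$ (with tax rate $\delta$ and initial supremum level $\bar{x}\vee(X_0+K_0)$) to obtain a candidate tax process $V^K$; then let $\Phi(K)_t$ denote the bailout increment required to push $V^K$ up to nonnegativity in the Skorokhod sense, suitably compensated for the $(1-\delta)$-factor by which the added bailouts themselves get taxed. A fixed point $K^+=\Phi(K^+)$, together with $V^+:=V^{K^+}$, then yields the desired pair satisfying \eqref{taxcondition} and the complementarity \eqref{compl_condition}.

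\emph{Contraction near zero and global extension.} Choose $T>0$ small enough that for all $K$ in a suitable ball, both $V^K$ and $\overline{X+K}$ remain inside $[0,\epsilon]$; then $\delta$ is identically $\gamma<1$, and the tax transform collapses to the explicit affine expression $V^K_t = X_t + K_t - \gamma(\overline{X+K}_t - \bar x\vee(X_0+K_0))$. Since the Skorokhod regulator is $1$-Lipschitz in the supremum norm, the map $K\mapsto \overline{X+K}$ is $1$-Lipschitz, and $\gamma<1$, a direct calculation gives a Lipschitz constant for $\Phi$ strictly less than $1$, so Banach's theorem supplies a unique fixed point on $[0,T]$. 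Beyond $T$ one iterates: either $V^+$ is strictly positive on some interval, on which $K^+$ is constant (by complementarity) and $V^+$ is determined uniquely as the tax transform of the shifted path $X+K^+$ via \cite[Theorem 1]{ALW-tax-equiv}; or $V^+$ returns to zero, and we restart the local contraction from the current state, patching the resulting pieces into a global solution. Uniqueness propagates along the same partition.

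\emph{Main obstacle.} The principal technical difficulty is proving the contraction estimate in the unbounded-variation case with $X_0=\bar{x}=0$, where taxation and bailouts both act on arbitrarily small time scales near $0$ and where $V^K$ and $\overline{X+K}$ both depend on $K$. The hypothesis that $\delta\equiv\gamma$ on $[0,\epsilon]$ with $\gamma<1$ is precisely what produces a usable contraction factor; relaxing it would force a substantially more delicate local existence argument.
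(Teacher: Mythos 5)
Your high-level strategy matches the paper's: a Banach fixed-point argument to handle the regime where taxation and bailouts interact, followed by a pathwise concatenation over excursions. However, there are several concrete gaps in the way the proposal is set up, and one of them is a genuine obstruction as written.

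\textbf{The fixed-point map is not well-posed as described.} You propose to take $V^K = \Upsilon(X+K)$ (the tax transform of $X+K$) and then define $\Phi(K)$ as ``the bailout increment required to push $V^K$ up to nonnegativity.'' But $V^K$ already contains $K$, so the Skorokhod regulator of $V^K$ is the \emph{additional} bailout on top of $K$, and a fixed point of that map would force this additional amount to coincide with $K$ itself, which is not the right condition. The ``compensation by $(1-\delta)$'' you invoke does not repair this: that factor only intervenes on the (Lebesgue-null) set where $V^+=0$ and $X+K^+$ simultaneously attains its running supremum. The paper avoids this circularity by rewriting \eqref{taxcondition}--\eqref{compl_condition} directly as the clean Skorokhod fixed-point equation
\begin{equation*}
  K^+ = \Psi\bigl(X - \gamma\, \mathcal S(X+K^+)\bigr),
\end{equation*}
where $\Psi$ is the ordinary reflection regulator and $\mathcal S$ the running supremum. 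The contraction constant is then exactly $\gamma$, coming solely from the $1$-Lipschitzness of $\mathcal S$ and $\Psi$ in the uniform norm. You should make the map this precise; otherwise the claimed ``direct calculation'' of the Lipschitz constant has nothing concrete to act on.

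\textbf{The localisation is wrong.} You try to choose a deterministic $T>0$ small enough that $V^K$ and $\overline{X+K}$ stay inside $[0,\epsilon]$ for all $K$ in a suitable ball. This is both unnecessary and, in general, unachievable: if $X_0$ is very negative then the fixed point satisfies $K^+_0 = -X_0$, so $K^+$ leaves any small ball immediately, and no deterministic $T$ will work uniformly. The paper's contraction requires no localisation at all -- it is a strict contraction on the space of càdlàg paths on $[0,T]$ for \emph{every} $T>0$, since $\gamma<1$ is a uniform bound. The role of the hypothesis $\delta\equiv\gamma$ on $[0,\epsilon]$ is different from what you use it for: after solving the constant-rate problem globally in $t$, one observes that the solution of the general problem agrees with it up to the \emph{path-dependent} hitting time $T_0 = \inf\{t : V^+_t = \epsilon\}$, at which point $V^+$ is at a strictly positive level $\epsilon$ and the alternating excursion construction (which never needs a contraction argument) takes over.

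\textbf{Non-accumulation of switching times is not addressed.} In the excursion concatenation, you need to rule out the possibility that the alternating sequence of ``taxation'' and ``reflection'' phases has a finite accumulation point, in which case the construction would not cover all of $[0,\infty)$. The paper proves this by contradiction: if the times accumulated at $T<\infty$, the constructed $V^+$ would oscillate between $0$ and a value $\ge X_0\vee\epsilon>0$ infinitely often before $T$ and hence would have no left limit at $T$, contradicting the fact that all terms on the right of \eqref{taxcondition} possess left limits. Your phrase ``patching the resulting pieces into a global solution'' elides this step.

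In summary, the architecture (Banach contraction, then excursion concatenation) is the same as the paper's, but you need (a) the explicit fixed-point equation $K^+ = \Psi(X-\gamma\mathcal S(X+K^+))$ rather than the circular map you describe, (b) to drop the small-$T$/ball localisation in favour of the global contraction plus the hitting time $T_0$, and (c) a non-accumulation argument for the concatenation.
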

\begin{proof}
  Throughout the proof we use the notation $\mathcal S(Y)=(\mathcal
  S_t(Y))_{t\geq 0}$ where $\mathcal S_t(Y)=\sup_{0\leq s\leq t}Y_s$ for a
  c\`adl\`ag path $Y$ in order to distinguish it from the notation $\overline Y_t$
  in \eqref{e:overlineY}.  We assume $\bar{x}=X_0\vee 0$ for now leaving the
  case $\bar{x}>X_0\vee 0$ until the end.  We treat the cases $X_0>0$ and
  $X_0\leq 0$ separately. 

  First assume $\bar{x} =X_0>0$. Then the unique pair of paths $(V^+,K^+)$
  satisfying the desired properties can be constructed recursively in a similar
  way as in the Appendix of \cite{IvanovsI}. To explain, given a c\`adl\`ag path
  $Y$ with no positive jumps we define a number of transformations of $Y$.
  First, by Theorem~1(ii) and Lemma~4 in \cite{ALW-tax-equiv} and the first
  assumption in the theorem there exists a unique solution
  $\Upsilon=\Upsilon(Y)$ to the equation
  \begin{equation*}
    \Upsilon_t = Y_t - \int_{0^+}^t \delta(\Upsilon_s)\mathrm d \mathcal S_s(Y) , \quad t\geq 0.
  \end{equation*}
  Second, $(\Psi,\Phi)=(\Psi,\Phi)(Y)$ is the reflection map defined by
  \begin{equation*}
    \begin{split}
      \Psi_t = & \left( -\inf_{0\leq s\leq t}Y_s \right) \vee 0 , \\
      \Phi_t = & Y_t + \Psi_t .
    \end{split}
  \end{equation*}
  Note that $\Phi_t(Y)$ is the path $Y$ reflected from below at $0$. It is
  well-known (see, e.g., Section 3 of \cite{whitt_reflmap}), 
  that, given $Y$,
  $(\Psi,\Phi)$ is the unique pair of paths with $\Psi$ increasing and $\Phi$
  positive such that $\Phi_t=Y_t+\Psi_t$ and the complementarity condition
  $\int_0^\infty \Indic{\Phi_t >0} \mathrm d \Psi_t = 0$ is satisfied.
  Using the transformations $\Upsilon,\Psi,\Phi$ we construct a pair of paths
  $(V^+,K^+)$ with $V^+$ positive and $K^+$ increasing and such that
  \eqref{taxcondition} and \eqref{compl_condition} are satisfied. It will be
  immediately clear from the construction that these properties are satisfied
  and that it is the only pair of paths satisfying these properties. 
  With $S_1=\inf\{t\geq 0:\Upsilon_t(X)<0\}$, we set $(V_t^+,K^+_t)=(\Upsilon_t(X),0)$, $t\in[0,S_1)$. If $S_1=\infty$ we are done.
  Otherwise, with $Y_t=V_{S_1-}^+ + \Delta X_{S_1} + X_{S_1+t}-X_{S_1}$ and $T_1=S_1+\inf\{t\geq 0:\Phi_t(Y)=\overline V^+_{S_1-}\}$ we set $(V_{S_1+t}^+,K_{S_1+t}^+ - K_{S_1-}^+))=(\Phi_t(Y),\Psi_t(Y))$, $t\in[0,T_1-S_1)$. If $T_1=\infty$ we are done. 
  Otherwise, with $Y_t=V^+_{T_1-}+ X_{T_1+t}-X_{T_1}$ (note $\Delta X_{T_1}=0$ by lack of upward jumps) and $S_2=T_1+\inf\{t\geq 0:\Upsilon_t(Y)<0\}$ we set $(V_{T_1+t}^+,K_{T_1+t}^+ - K_{T_1-}^+)=(\Upsilon_t(Y),0)$, $t\in[0,S_2-T_1)$. 
  In the obvious way we can continue this procedure to construct $(V^+,K^+)$ on $[S_2,T_2), [T_2,S_3),\ldots$, which yields the tax-reflection transform on the whole time horizon $[0,\infty)$ provided the time points $S_1,T_1,S_2,T_2,\ldots$ do not accumulate. To see that this is the case, we argue by contradiction and assume $T:=\lim_{n\to\infty} T_n<\infty$. Because we can identify, for  $n\geq 1$, $T_n=\inf\{t>S_n:V_t^+=\overline V^+_{S_n}\}$, we have $V^+_{T_n}\geq V_{0}=X_0>0$ while on the other hand $V^+_{S_n}=0$ for any $n\geq 1$. Hence $V^+$ does not have a right-limit at $T$. But \eqref{taxcondition} holds for all $t\in[0,T)$ and so the limit  $\lim_{t\uparrow T} V_t^+$ must exist because all three terms on the right-hand side of \eqref{taxcondition} have a right-limit at $T$ as $X$ is c\`adl\`ag on $[0,\infty)$ and $K^+$ and the integral term term are increasing on $[0,T)$. So we have reached a contradiction and we must have $T=\infty$ and hence $(V^+_t,K^+_t)$ is defined for all $t\geq 0$.
  
  Now assume $X_0\leq 0$ and $\bar{x}=0$. Then by \eqref{taxcondition}-\eqref{compl_condition} we must have $K_0^+=-X_0$ and so $(\overline{X+K^+})_t=\mathcal S_t(X+K^+)$ for all $t\geq 0$. Before tackling the general case we assume the special case where $\delta(x)=\gamma$ for all $x\geq 0$. By properties of the reflection map, $(V^+,K^+)$ is the tax-reflection transform of $X$ with constant tax rate $\gamma$ and initial maximum level $0$ if and only if 
  \begin{equation*}
    (V^+,K^+)= \left( \Phi \left( X-\gamma\mathcal S(X+K^+) \right) , \Psi \left( X-\gamma\mathcal S(X+K^+) \right) \right).
  \end{equation*}
  So if we can show there exists a unique c\`adl\`ag path $K^+$ to the fixed point equation
  \begin{equation}\label{fixedpointeq}
    K^+=\Psi \left( X-\gamma\mathcal S(X+K^+) \right),
  \end{equation} 
  then existence and uniqueness of the tax-reflection transform of $X$ with tax rate $\gamma$ and initial maximum level $0$ follows.
  To this end, let $T>0$. We claim that for $K_1,K_2$ two c\`adl\`ag paths,
  \begin{equation}\label{lipschitzprop}
    \sup_{t\in[0,T]} \left| \mathcal S_t(X+K^1)  - \mathcal S_t(X+K^2) \right| \leq \sup_{t\in[0,T]} \left| K_t^1 -K_t^2 \right|.
  \end{equation} 
  Indeed, suppose $t\in[0,T]$ is such that $\mathcal S_t(X+K^1) \geq \mathcal S_t(X+K^2)$. Then for any $\epsilon>0$ there exists $t'\in[0,t]$ such that $X_{t'}+K^1_{t'}\geq \mathcal S_t(X+K^1)-\epsilon$ and so since $\mathcal S_s(X+K^2)$ is increasing in $s$,
  \begin{equation*}
    \begin{split}
      \left|\mathcal S_t(X+K^1) - \mathcal S_t(X+K^2) \right|  
      \leq & X_{t'}+K^1_{t'} -\mathcal S_{t'}(X+K^2) +\epsilon \\
      \leq & X_{t'}+K^1_{t'} - (X_{t'}+K^2_{t'}) +\epsilon \\
      \leq & \sup_{s\in[0,T]} \left| K_s^1 -K_s^2 \right| +\epsilon.
    \end{split}
  \end{equation*}
  Since we can derive the same inequality for $t\in[0,T]$  such that $\mathcal S_t(X+K^1) \leq \mathcal S_t(X+K^2)$ for any given $\epsilon>0$, \eqref{lipschitzprop} follows. Similarly we can show for two c\`adl\`ag paths $Y^1$ and $Y^2$
  \begin{equation*}
    \sup_{t\in[0,T]} |\Psi_t(Y^1)-\Psi_t(Y^2)| \leq  \sup_{t\in[0,T]} |Y_t^1-Y_t^2|.
  \end{equation*}
  Hence for two c\`adl\`ag paths $K^1$ and $K^2$,
  \begin{multline*}
    \sup_{t\in[0,T]} \left| \Psi_t \left( X-\gamma\mathcal S(X+K^1) \right) -  \Psi_t \left( X-\gamma\mathcal S(X+K^2) \right) \right| \\
    \leq  \sup_{t\in[0,T]} \left|  X_t-\gamma\mathcal S_t(X+K^1) - (X_t-\gamma\mathcal S_t(X+K^2))  \right|
    \leq \gamma\sup_{t\in[0,T]} \left| K^1_t-K^2_t \right|.
  \end{multline*}
  Since $\gamma\in[0,1)$, we see that $K\mapsto \Psi \left( X-\gamma\mathcal S(X+K) \right)$ is a contraction mapping on the space of c\`adl\`ag functions on $[0,T]$ under the uniform metric, which is a complete metric space. So by the Banach fixed point theorem there exists, for each $T>0$, a unique c\`adl\`ag path $(K^+_t)_{t\in[0,T]}$ such that $K^+_t =\Psi_t \left( X-\gamma\mathcal S(X+K^+) \right)$ for $t\in[0,T]$. It follows that there exists a unique c\`adl\`ag  solution to the fixed point equation \eqref{fixedpointeq}.

  When $X_0\leq 0$, $\bar{x}=0$ and the tax rate $\delta$ is as in the statement of the proposition, then, with $\epsilon$ as in the statement of the proposition, $(V^+,K^+)$ on $[0,T_0)$ where $T_0=\inf\{t\geq 0: V_t^+=\epsilon\}$, is uniquely given by the tax-reflection transform of the path $X$ with constant tax rate $\gamma$ and initial maximum level $0$ and, if $T_0<\infty$, for $t\geq 0$, $(V^+_{T_0+t},K^+_{T_0+t}-K^+_{T_0-})$ is uniquely given by the tax-reflection transform of the path $s\mapsto \epsilon+X_{T_0+s}-X_{T_0}$ with tax rate $\delta$ and initial maximum level $\epsilon$.    

  It remains to cover the case where $\bar{x}>X_0\vee 0$. In this case one easily sees that  $(V^+,K^+)$ on $[0,T_1)$ is uniquely given by the reflection map $(\Phi(X),\Psi(X))$ with $T_1=\inf\{t\geq 0:\Phi_t(X)=\bar{x}\}$ and, if $T_1<\infty$, for $t\geq 0$, $(V^+_{T_1+t},K^+_{T_1+t}-K^+_{T_1-})$ is uniquely given by the tax-reflection transform of the path $s\mapsto \bar{x}+X_{T_1+s}-X_{T_1}$ with tax rate $\delta$ and initial maximum level $\bar{x}$.   
\end{proof}

\begin{remark}\label{remark_perturbed}
The case where $\delta(x)=\gamma$ for all $x\geq 0$ with $\gamma<1$  and the
path $X$ is continuous with $X_0=0$ in Theorem \ref{p:V} has been well-studied
in the literature. Namely, Le Gall and Yor \cite{legall_yor}\footnote{In
\cite{legall_yor}  $X$ is considered to be a Brownian motion, but their
argument equally applies to a general continuous path $X$.}  remarked at the
end of their paper that existence and uniqueness of the tax-reflection
transform can be obtained via a fixed point argument (as we did above) in the
case $\gamma\in(-1,1)$. Further, Davis \cite{davis96} (see also
\cite{davis_newyork}) shows that existence and uniqueness also hold in the
more difficult case where $\gamma=-1$ and that existence holds but uniqueness
fails when $\gamma<-1$. Despite that uniqueness does not hold in general when
$\gamma<-1$, Davis \cite{davis99} and Chaumont and Doney \cite{chaumount_doney}
were able to show that for $\gamma<-1$ there still exists a unique solution
(which is moreover adapted) to \eqref{taxcondition}-\eqref{compl_condition} if
$X$ is a Brownian motion. (Reflected) Brownian motions that are modified in
this way when they reach a new maximum are known in the literature as perturbed
Brownian motions. 
\end{remark}

\subsection{Construction of the natural tax process with bailouts}
\label{sec:construction}

\revised{In this section, we give a rigorous construction of the 
natural tax process with minimal bailouts $(V,K)$, which we described
briefly in Section \ref{sec:natural}.}
This is based on the tax-reflection transform (\cref{p:V}) 
of a spectrally negative Lévy process $X$.
Rather than construct it directly, we build a right-continuous
version $(V^+,K^+)$ and define $(V,K)$ in terms of this.

\begin{definition}
  \label{d:VK}
  Assume $\delta:[0,\infty)\to[0,1)$ satisfies the conditions of Theorem \ref{p:V}.
  Let $(V^+, K^+)$ be the pair of stochastic processes given by the tax-reflection
  transform of the spectrally negative Lévy process $X$ with $X_0=x$, tax rate $\delta$ and initial maximum level $\bar x\geq x\vee 0$ as defined in Theorem \ref{p:V}. 
  Then let $K_0=0$ and
  $K_t = K^+_{t-}$ for $t > 0$, to yield a left-continuous process $K$, and define $V_0=V_0^+-K_0^+$ and $V_t=V_t^+ - \Delta K_t^+$ for $t> 0$. 
  We call both $V$ and the pair $(V,K)$  the \emph{natural tax process with minimal bailouts and (natural) tax rate $\delta$} (under the
  measure $\PP_{x,\bar{x}}$). 
\end{definition}
The definition of $(V,K)$ implies
\begin{equation}\label{VK_eq}
	V_t = X_t + K_t - \int_{0^+}^t \delta(V_s) \, \dd (\overline{X+K})_s, \quad t\geq 0.
\end{equation}
Indeed, if $t>0$ is such that $\Delta K_t^+>0$, then $-\Delta X_t\geq \Delta K_t^+$ by  \eqref{taxcondition}-\eqref{compl_condition}. Similarly, if $K_0^+>0$, then $-X_0\geq K_0^+$. Therefore, $(\overline{X+K})_s=(\overline{X+K^+})_s$ for all $s\geq 0$ and the (countable) set $\{s\geq 0:V_s\neq V_s^+\}$ is a null set for the Lebesgue-Stieltjes measure $\mathrm d\mathrm (\overline{X+K^+})_s$. Hence \eqref{VK_eq} follows from \eqref{taxcondition}.

\begin{proposition}\label{prop_markov}
  \begin{enumerate} 	
    \item\label{item_VK_adapted}
      $(V,K)$ is adapted to the filtration $(\Ff_t)_{t\geq 0}$.
    \item\label{item_V_smp}
      The bivariate process $(V,\overline{V})$, with associated probability
      measures $\PP_{x,\bar{x}}$, is strong Markov in $(\Ff_t)_{t\geq 0}$.
  \end{enumerate}
\end{proposition}
\begin{proof}
 (i).     We will begin by showing that $(V^+, K^+)$, as defined in Definition \ref{d:VK}, is adapted to $(\Ff_t)_{t\ge 0}$.
      Considering the construction given in the proof of Theorem~\ref{p:V},
      if $Y$ is an adapted stochastic process, then the processes $\mathcal{S}(Y)$,
      $\Upsilon(Y)$, $\Phi(Y)$ and $\Psi(Y)$ defined there are adapted;
      for the process $\Upsilon(Y)$, this can be shown using Theorem 1(ii) (in particular
      Equation (6)) and Lemma 4 in \cite{ALW-tax-equiv}.
      Furthermore, the random times $S_1, T_1, S_2, T_2, \dotsc$ are stopping times with respect to $(\Ff_t)_{t\ge 0}$.
      This gives an immediate proof in the case that $x = \bar{x} > 0$.
      In the case where $x\le 0$, $\bar{x} = 0$ and $\delta(y) = \gamma$ for $y\geq 0$
      and $\gamma \in [0,1)$, we analyse the proof of Theorem~\ref{p:V} in more
      detail. Let
      $\Xi(L) = \Psi(X - \gamma \mathcal{S}(X+L))$.
      We have already shown that this is a contraction mapping, and evidently it
      maps adapted paths to adapted paths. The fixed point of $\Xi$ can be
      obtained by Picard iteration. Let $L^{(0)}_t = 0$ for all $t\ge 0$,
      and define $L^{(n)} = \Xi(L^{(n-1)})$ for $n\ge 1$.
      Then $K^+$ is the limit of $L^{(n)}$ as $n\to \infty$, and this limit
      holds uniformly on $[0,T]$ for any $T>0$. It follows that $K^+$ is
      adapted, and that the same holds for $V^+ = \Phi(X - \gamma\mathcal{S}(X+K^+))$.
      Finally, the remaining cases ($x\le 0$, $\bar{x} = 0$ and more general $\delta$;
      and $\bar{x} > x \vee 0$) can be resolved as in the proof of Theorem~\ref{p:V}.

      We have  shown that $(V^+, K^+)$ is adapted to $(\Ff_t)_{t\ge 0}$,
      and we turn now to $(V,K)$. Fix $t> 0$. 
      Since $K_t = \lim_{s\uparrow t} K^+_s$, it is $\Ff_t$-measurable, and so
      $K$ is adapted to $(\Ff_t)_{t\ge 0}$.
      Finally, the process $V$ satisfies \eqref{VK_eq}, $X$ and $K$ are adapted
      to $(\Ff_t)_{t\ge 0}$, and so is the integral 
      $t \mapsto \int_{0^+}^t \delta(V_s) \,\mathrm{d}(\overline{X+K})_s$,
      since it is continuous and adapted to $(\Ff_t)_{t\ge 0}$. 
      It follows that $V$ is adapted to $(\Ff_t)_{t\ge 0}$, and we are done.
   
\medskip

(ii).
  Let $T \ge 0$ be a finite stopping time with respect to $(\Ff_t)_{t\geq 0}$, and 
  let $(\tilde{V}^{+}, \tilde{K}^{+})$ be the tax-reflection
   transform of
\begin{equation*}
   \tilde{X}_t \coloneqq V_T + X_{T+t} - X_T, \quad t\ge 0, 
\end{equation*}
   with initial maximum level $\overline{V}_T$. 
 Now define
  \begin{align*}
  	\hat{V}^+_t &= 
  	\begin{cases}
  		{V}^{+}_t, & 0\le t< T, \\
  		\tilde{V}^{+}_{t-T}, &t\geq T,
  	\end{cases}
  	&\hat{K}^+_t &=
  	\begin{cases}
  		{K}^{+}_t, & 0\le t< T, \\
  		{K}^{+}_{T-} + \tilde{K}^{+}_{t-T}, &t\geq T,
  	\end{cases}
  \end{align*}
where $(V^+,K^+)$ is as in Definition \ref{d:VK}.
  Since $(\hat{V}^+,\hat{K}^+)$ is a tax-reflection transform of
  $X$ with initial maximum level $\bar{x}$, and such a transform is unique,
  it follows that $(V^+,K^+) = (\hat{V}^+, \hat{K}^+)$. Consequently, with $(\tilde V_0,\tilde K_0):=(\tilde V_0^+ - \tilde K_0^+,0)$ and $(\tilde V_t,\tilde K_t): = (\tilde V_t^+ - \Delta \tilde K^+_t,\tilde K^+_{t-})$ for $t>0$, we have $(V_{T+t})_{t\ge 0}=\tilde V$ and  by Definition \ref{d:VK} and \eqref{VK_eq},
  \begin{equation*}
	\tilde V_{t} = \tilde X_t + \tilde K_t - \int_{0^+}^t \delta(\tilde V_s) \, \dd (\overline{X+\tilde K})_s, \quad t\geq 0.
\end{equation*}
 The homogeneous strong Markov property of L\'evy processes \cite[Theorem~3.1]{Kyp2}
 means that, given $V_T$, $\tilde{X}$ has the law of $X$ under $\PP_{V_T,\overline{V}_T}$
  and is conditionally independent of $\Ff_T$.
  It follows that, given $(V_T,\overline{V}_T)$, the pair $(V_{T+t},\overline V_{T+t})_{t\geq 0}=(\tilde V_t,\sup_{0\leq s\leq t}\tilde V_s \vee \overline V_T)_{t\geq 0}$
  is conditionally independent of $\Ff_T$ and has the law of $(V,\overline V)$ under $\PP_{V_T,\overline{V}_T}$.
  This establishes the strong Markov property of $(V,\overline V)$.
%
%
%
\end{proof}

\subsection{Verification lemma and proof of \cref{p:value}}\label{sec_verif_char}

We state the following result, a generalisation of Lemma~2.1 in \cite{GeneralTaxStructure} and Lemma 4 in \cite{ALW-tax-equiv},
whose proof is however identical. 
\begin{lemma}
  \label{l:sup}
  Fix $\bar x\geq 0$ and let $Y=(Y_t)_{t\geq 0}$ be a measurable real-valued path such that $\overline Y$ is continuous.
  Let $H=(H_t)_{t\geq 0}$ be a measurable path such that $H_t\leq 1$ for all $t\geq 0$. Then the path $U=(U_t)_{t\geq 0}$ defined by
$  
  	U_t = Y_t - \int_{0^+}^t H_s \mathrm d \overline Y_s,
$  
 satisfies,
$  \overline{U}_t
= \overline{Y}_t - \int_{0+}^t H_s \, \dd \overline{Y}_s,
\  t\ge 0$.
  
\end{lemma}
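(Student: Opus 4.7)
Plan: Define $f(t) := \overline{Y}_t - \int_{0^+}^t H_s \, \dd \overline{Y}_s$ and aim to prove $\overline{U}_t = f(t)$ by two opposite inequalities. A key preliminary observation is that $f$ is nondecreasing, because its Stieltjes differential equals $(1-H_s)\, \dd \overline{Y}_s$, a nonnegative measure since $H_s \le 1$ and $\overline{Y}$ is increasing; moreover $f(0) = \overline{Y}_0 = \bar{x}$, so $f(t) \ge \bar{x}$ for all $t \ge 0$.

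For the upper bound $\overline{U}_t \le f(t)$, the inequality $Y_s \le \overline{Y}_s$ together with the monotonicity of $f$ gives, for every $s \le t$, $U_s = Y_s - \int_{0^+}^s H_r \, \dd \overline{Y}_r \le \overline{Y}_s - \int_{0^+}^s H_r \, \dd \overline{Y}_r = f(s) \le f(t)$. Combining this with $\bar{x} \le f(t)$ yields $\overline{U}_t = \bar{x} \vee \sup_{s\le t} U_s \le f(t)$.

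For the lower bound I split into cases. If $\overline{Y}_t = \bar{x}$, then $\overline{Y}$ is constant on $[0,t]$, so the integral in $f(t)$ vanishes and $f(t) = \bar{x} \le \overline{U}_t$. Otherwise $\overline{Y}_t > \bar{x}$ forces $\overline{Y}_t = \sup_{s\le t} Y_s$, so I pick $s_n \in [0,t]$ with $Y_{s_n} \to \overline{Y}_t$. Since $Y_{s_n} \le \overline{Y}_{s_n} \le \overline{Y}_t$, also $\overline{Y}_{s_n} \to \overline{Y}_t$, so the $\dd\overline{Y}$-mass of the interval $(s_n, t]$ vanishes. Boundedness of $H$ (in the intended applications $H \in [\alpha,\beta]$) then forces $\int_{0^+}^{s_n} H_r \, \dd \overline{Y}_r \to \int_{0^+}^t H_r \, \dd \overline{Y}_r$, whence $U_{s_n} \to f(t)$ and therefore $\overline{U}_t \ge f(t)$.

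The delicate step is the convergence of tax integrals in the second case of the lower bound: it requires that $H$ be bounded so that vanishing $\dd\overline{Y}$-mass on $(s_n, t]$ translates into a vanishing integral. Everything else reduces to the elementary inequality $Y \le \overline Y$ and the monotonicity of $f$, both immediate from the hypothesis $H \le 1$ and the continuity of $\overline{Y}$.
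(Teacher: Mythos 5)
Your argument is correct and is the natural two‑inequality proof: the upper bound follows from $Y\le\overline{Y}$ combined with the monotonicity of $f(t)=\overline{Y}_t-\int_{0^+}^t H_s\,\dd\overline{Y}_s$ (which is the only place the hypothesis $H\le 1$ is used), and the lower bound follows by approximating $\sup_{s\le t}Y_s$ along a sequence. The paper does not reproduce a proof but defers to Lemma~2.1 of \cite{GeneralTaxStructure} and Lemma~4 of \cite{ALW-tax-equiv}, whose argument is of this same kind, so there is no genuine methodological divergence.

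The one step you flag as delicate — appealing to boundedness of $H$ in the second case of the lower bound — deserves a slightly sharper treatment, since the lemma as stated only assumes $H_s\le 1$ with no lower bound. This is harmless in every application here ($H$ is $[\alpha,\beta]$- or $[0,1)$-valued), but one can also close it for the statement as given: for $U$ and $f$ to be finite one needs $\int_{(0,t]}(1-H_r)\,\dd\overline{Y}_r<\infty$, and then, since $\dd\overline{Y}\bigl((s_n,t]\bigr)=\overline{Y}_t-\overline{Y}_{s_n}\to 0$, absolute continuity of the Lebesgue--Stieltjes integral yields $\int_{(s_n,t]}(1-H_r)\,\dd\overline{Y}_r\to 0$. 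Writing
\[
f(t)-U_{s_n}=\bigl(\overline{Y}_{s_n}-Y_{s_n}\bigr)+\int_{(s_n,t]}(1-H_r)\,\dd\overline{Y}_r,
\]
with both summands nonnegative, then gives $U_{s_n}\to f(t)$ without a two-sided bound on $H$. Apart from this cosmetic point your proof is complete.
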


Our approach is based on the following verification lemma for the control problem \eqref{e:control}.
In order to state the conditions for this, we define the operator
\begin{equation*}
  \Aa_q g(x) = \frac{1}{2}\sigma^2 g''(x)   + c g'(x)    + \int_0^\infty \left( g(x-\theta) - g(x) + g'(x)\theta\Indic{\theta\leq 1} \right) \, \nu(\dd \theta) - q g(x) ,
\end{equation*}   
with the understanding that the first term on the right hand side disappears
when $\sigma=0$,
whenever $g$ has sufficient regularity and integrability such that the right-hand
side is well-defined and the integral against $\nu$ converges absolutely.

The lemma involves a candidate optimal value function $v(x,\bar{x})$, where $x$
and $\bar{x}$ have the role of the initial capital and initial maximum level,
respectively. We will write $\partial_x v$ for the derivative of $v$ in the first
variable, and $\partial_{\bar{x}} v$ for the derivative in the second variable.

\begin{lemma}[Verification lemma]
  \label{l:verif}
  Assume that $\int_1^\infty \theta \, \nu(\dd \theta) < \infty$ and $\eta\geq 0$.
  Let
  $v \from \RR\times [0,\infty) \to \RR$ be a function of the form 
  $v(x,\bar x)=\sum_{i=1}^p g_i(x)h_i(\bar x)$ for some $p\geq 1$,
  such that, for each $i=1,\ldots,p$:
  \begin{itemize}
    \item[(a)] $g_i$ is continuous on $\mathbb R$ and
      continuously differentiable on $\mathbb R\backslash\{0\}$ 
      with $g_i'(0):=\lim_{x\downarrow 0} g_i'(x)$ well-defined and real.
      If $X$ has paths of unbounded variation then, in addition, 
      $g_i$ is continuously differentiable on $\mathbb R$ and 
      twice continuously differentiable  on $(0,\infty)$. 
    \item[(b)]  $h_i$ is absolutely continuous on $[0,\infty)$ with a locally bounded, left-continuous density 
      denoted by $h'_i$.
  \end{itemize}
  If $v$ satisfies in addition the conditions
  \begin{enumerate}[label=(\roman*),ref=(\roman*)]
    \item\label{itemverif_formbelow0}
      $v(x,\bar{x}) = \eta x + v(0,\bar{x})$
      for all $x < 0$ and $\bar{x}\geq 0$,
    \item\label{itemverif_genineq}
      $\Aa_q v(x,\bar{x}):= \Aa_q [v(\cdot , \bar x)](x) = 0$ for all $x>0$ and $\bar{x}\geq 0$, 

    \item\label{itemverif_pdeineq}
      $\gamma \partial_x v(\bar{x},\bar{x})
      + (\gamma-1)\partial_{\bar{x}} v(\bar{x},\bar{x})
      \ge \gamma$ for all $\gamma\in [\alpha,\beta]$ and all
      $\bar{x}\geq 0$,

    \item\label{itemverif_bound} $v(x,\bar x)$ is bounded on the set $\{x\geq 0, \bar x\geq 0: x\leq \bar x\}$,

    \item\label{itemverif_derivbound}
      $\partial_x v(x,\bar{x}) \le \eta$ for all  $0\leq x\le \bar{x}$, 
  \end{enumerate}
  then $v(x,\bar{x}) \ge v^*(x,\bar{x})$ for all $(x,\bar x)\in\RR\times [0,\infty)$ such that $x\leq \bar x$.
\end{lemma}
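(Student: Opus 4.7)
The strategy is to apply a Meyer--Itô formula to $e^{-qt}v(U^+_t,\overline U_t)$ along an arbitrary admissible control and then read off the desired bound from the resulting decomposition, using each hypothesis in turn. Fix $(x,\bar x)$ with $x\le\bar x$, $\bar x\ge 0$, and take $\pi=(H,L)\in\Pi_{x,\bar x}$ with controlled process $U$ as in \eqref{e:controlled-V}. Throughout I work with the right-continuous versions $L^+$ of $L$ and $U^+_t:=X_t+L^+_t-A_t$ of $U$, where $A_t:=\int_{0^+}^t H_s\,\dd\overline Y_s$ and $Y:=X+L^+$. By admissibility (iv), $\overline Y=\overline{X+L}$ is continuous, and then by \autoref{l:sup} so is $\overline U_t=\overline Y_t-\int_{0^+}^t H_s\,\dd\overline Y_s$; condition (vi) together with $X$ having no positive jumps forces $U^+_{t-}\ge 0$ for all $t\ge 0$. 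The product structure $v=\sum_{i=1}^p g_ih_i$ combined with the regularity in (a) on the $g_i$ and the absolute continuity in (b) on the $h_i$ is exactly what is required to apply Meyer--Itô to $e^{-qt}v(U^+_t,\overline U_t)$: in the unbounded-variation case (a) gives $g_i\in C^1(\RR)\cap C^2((0,\infty))$ and (i) forces $g_i'(0^+)\cdot h_i(\bar x)$ to sum to $\eta$ so the slopes across $0$ agree; in the bounded-variation case the Lebesgue-null set of zero-crossings of $U^+$ makes the slope discontinuity of $v$ at $0$ immaterial.

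Decomposing $\dd U^+_t=\dd X_t+\dd L^+_t-H_t\,\dd\overline Y_t$ and exploiting the continuity and finite variation of $\overline U$, the formula yields, for a local martingale $M$,
\begin{align*}
e^{-qt}v(U^+_t,\overline U_t)-v(x,\bar x)
&=M_t+\int_0^t e^{-qs}\Aa_q v(U^+_{s-},\overline U_s)\,\dd s\\
&\quad +\int_{0^+}^t e^{-qs}\bigl[\partial_{\bar x}v(U^+_s,\overline U_s)\,\dd\overline U_s-\partial_xv(U^+_s,\overline U_s)H_s\,\dd\overline Y_s\bigr]\\
&\quad +\int_{0^+}^t e^{-qs}\partial_xv(U^+_{s-},\overline U_s)\,\dd L^{+,c}_s\\
&\quad +\sum_{\substack{0<s\le t\\\Delta L^+_s>0}}e^{-qs}\bigl[v(U^+_s,\overline U_s)-v(U^+_s-\Delta L^+_s,\overline U_s)\bigr],
\end{align*}
the jumps of $X$ being absorbed into $\Aa_q v$ via the Lévy compensator, and the bailout jumps being handled separately (note $U^+_s-\Delta L^+_s=U^+_{s-}+\Delta X_s$). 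Condition (ii) makes the drift integral vanish, since the set $\{s:U^+_{s-}=0\}$ either has zero Lebesgue measure or contributes zero by continuity at $0^+$. On the support of $\dd\overline Y_s$ one has $U^+_s=\overline U_s$ and $\dd\overline U_s=(1-H_s)\,\dd\overline Y_s$; rewriting (iii) with $\gamma=H_s$ as $(1-H_s)\partial_{\bar x}v(\bar x,\bar x)-H_s\partial_xv(\bar x,\bar x)\le-H_s$ then gives
\begin{equation*}
\int_{0^+}^t e^{-qs}\bigl[\partial_{\bar x}v(\overline U_s,\overline U_s)\,\dd\overline U_s-\partial_xv(\overline U_s,\overline U_s)H_s\,\dd\overline Y_s\bigr]\le-\int_{0^+}^t e^{-qs}H_s\,\dd\overline Y_s,
\end{equation*}
i.e.\ the tax/supremum bookkeeping is bounded above by minus the discounted tax revenue. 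The continuous bailout integrand is $\le\eta\,\dd L^{+,c}_s$ by (v) since $U^+\in[0,\overline U]$, while for each jump $\Delta L^+_s>0$, splitting into the cases $U^+_s-\Delta L^+_s\ge 0$ (apply (v) on $[0,\overline U_s]$) and $U^+_s-\Delta L^+_s<0$ (apply (i) together with (v) on $[0,U^+_s]$) uniformly yields $v(U^+_s,\overline U_s)-v(U^+_s-\Delta L^+_s,\overline U_s)\le\eta\,\Delta L^+_s$, hence the total bailout term is at most $\eta\int_0^t e^{-qs}\,\dd L^+_s$.

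Collecting these estimates,
\begin{equation*}
v(x,\bar x)\ge e^{-qt}v(U^+_t,\overline U_t)-M_t+\int_{0^+}^t e^{-qs}H_s\,\dd\overline Y_s-\eta\int_0^t e^{-qs}\,\dd L^+_s.
\end{equation*}
Localising by $(\tau_n)$ that makes $M$ a true martingale, taking $\EE_{x,\bar x}$, and letting $n\to\infty$ and $t\to\infty$, boundedness (iv) together with $0\le U^+\le\overline U$ gives $\EE_{x,\bar x}[e^{-q(t\wedge\tau_n)}v(U^+_{t\wedge\tau_n},\overline U_{t\wedge\tau_n})]\to 0$; monotone convergence and admissibility (v) handle the two remaining integrals and show they converge to the ingredients of $v^\pi(x,\bar x)$. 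This yields $v(x,\bar x)\ge v^\pi(x,\bar x)$, and taking the supremum over $\pi\in\Pi_{x,\bar x}$ proves the claim. The chief technical obstacle will be the rigorous justification of the Meyer--Itô formula in the bounded-variation case, where a slope discontinuity of $v$ at $0$ must be controlled via a local-time/occupation-time argument exploiting the absence of positive jumps; the second delicate point is the bailout-jump estimate when the underlying $X$-jump overshoots zero, and it is there that condition (i), by pinning the slope of $v$ below zero to $\eta$, produces the clean bound $\eta\,\Delta L^+_s$ needed to cancel the $-\eta\,\dd L^+$ term in $v^\pi$.
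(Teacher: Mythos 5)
Your overall strategy — apply a Meyer--Itô formula to $e^{-qt}v(U^+_t,\overline U_t)$, make the drift term vanish via~(ii), absorb the supremum term via~(iii), bound the bailout terms via~(i) and~(v), localise, and pass to the limit using~(iv) — coincides with the paper's argument, and the jump estimate $v(U^+_s,\overline U_s)-v(U^+_s-\Delta L^+_s,\overline U_s)\le\eta\,\Delta L^+_s$ using the absolute-continuity/density-bound argument is exactly what the paper does. However, you have omitted the paper's key technical device: replacing $v$ by the shifted function $v_\epsilon(y,\bar y)=v(y+\epsilon,\bar y+\epsilon)$ for $\epsilon>0$ before applying the Itô formula, and sending $\epsilon\downarrow 0$ at the very end.

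This omission is a genuine gap, and the place where it matters is precisely the one you flag as a ``chief technical obstacle'' without resolving. In the bounded-variation case, $g_i$ is only $C^1$ on $\RR\setminus\{0\}$ and the change-of-variables formula does not apply directly at the kink of $v$ at $x=0$; more seriously, your claim that ``the set $\{s:U^+_{s-}=0\}$ either has zero Lebesgue measure or contributes zero by continuity at $0^+$'' is false. For a bounded-variation spectrally negative $X$ with a minimal-bailout control, $U^+$ spends positive Lebesgue time at $0$, and condition~(ii) is only assumed for $x>0$, so the integral $\int_0^t e^{-qs}\Aa_qv(U^+_{s-},\overline U_s)\,\dd s$ does not obviously vanish. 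The $\epsilon$-shift removes both problems at once: $U^+_s+\epsilon>0$ so one is always in the region where $g_i$ is smooth and where $\Aa_qv_\epsilon\equiv 0$, and there is no local-time or occupation-time argument to make. Without that shift (or an equivalent substitute you would need to supply), the proof is incomplete in the bounded-variation case. A second, minor, inaccuracy: in the unbounded-variation case it is hypothesis~(a), not~(i), that makes the slope of $v$ continuous across $x=0$; condition~(i) only fixes its value to $\eta$.
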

\begin{proof}
  Before starting, we note that condition \ref{itemverif_genineq} makes sense.
  This is true because, for a function $g:\mathbb R\to\mathbb R$  satisfying the smoothness properties in part (a) above and such that, for some $a,b\in\mathbb R$, $g(y)=ay+b$ for $y\leq 0$,
  the function $\Aa_q g\colon (0,\infty)\to\mathbb{R}$ is well-defined. 
  This uses the assumption that
  $\int_1^\infty \theta \, \nu(\dd \theta) < \infty$.

  Fix $(x,\bar x)\in\RR\times [0,\infty)$ such that $x\leq \bar x$. We work on the filtered probability space $(\Omega,\mathcal F,(\Ff_t^x)_{t\geq 0},\mathbb P_{x,\bar x})$ where $\Ff_t^x$ is the completion of $\Ff_t$ under $\mathbb P_x$ so that the filtration $(\Ff_t^x)_{t\geq 0}$ satisfies the usual conditions.
  Fix $\pi:=(H,L)$ to be an admissible strategy,
  and write $U$ for the controlled process.
  We let $U^+$ and $L^+$ to be the right-continuous modifications of $U$ and $L$, respectively.
  Note that $U_t^+\geq 0$ for all $t\geq 0$ by Definition \ref{def_admis}\ref{item_admis_pos}, and $\overline U^+ \coloneqq \overline{U^+}$  has continuous sample paths  by \cref{l:sup} and Definition \ref{def_admis}\ref{item_admis_cont}.
  Fix $\epsilon>0$ and introduce the function $v_\epsilon\from \RR\times [0,\infty) \to \RR$ defined by $v_\epsilon(y,\bar y)=v(y+\epsilon,\bar y+\epsilon)$. We further denote $\partial_x v_\epsilon(y,\bar y):=\partial_x v(y+\epsilon,\bar y+\epsilon)$ and $\partial_{\bar{x}}v_\epsilon(y,\bar y):=\partial_{\bar{x}}v(y+\epsilon,\bar y+\epsilon)$.
  By the regularity assumptions, we can, for each $i=1,\ldots,p$, apply the Meyer-It\^o formula (Theorem IV.70 and Corollary 1 in \cite{Pro-si}) or the change of variables formula (see Theorem II.31 in \cite{Pro-si})\footnote{Note that the assumption of continuous differentiability in Theorem II.31 in \cite{Pro-si} can be relaxed to absolute continuity with a locally bounded density.} for $g_i(U^+ +\epsilon)$ depending on whether $X$ has paths of unbounded or bounded variation and we can use the change of variables formula for $h_i(\overline{U}^+ +\epsilon)$. Applying this together with the integration by parts formula (see p.68, Theorems II.26 and II.28 in \cite{Pro-si}) while noting that $h_i(\overline{U}^+ +\epsilon)$ 
  has continuous sample paths of bounded variation,  we get, for $t\geq 0$,
  \begin{equation*}
    \begin{split}
      e^{-q t}\, v_\epsilon & (U^+_{t},\overline{U}^+_{t}) 
      - v_\epsilon({U}^+_{0},\overline{U}^+_{0}) \\
      & = - \int^{t}_{0^{+}} q e^{-q s}\, v_\epsilon({U}^+_{s-},\overline{U}^+_{s})\, \dd s  
      \\
      & \quad {} +  \int^{t}_{0^{+}} e^{-q s}\sum_{i=1}^p \left( g_i({U}^+_{s-}+\epsilon)\mathrm d h_i(\overline{{U}}^+_{s}+\epsilon) + h_i(\overline{U}^+_{s-}+\epsilon)\mathrm d g_i(U^+_{s}+\epsilon) \right)  \\
      & =  -\int^{t}_{0^{+}} q e^{-q s}\, v_\epsilon({U}^+_{s-},\overline{U}^+ _{s})\, \dd s + \int^{t}_{0^{+}} e^{-q s}\, \partial_x v_\epsilon ({U}^+_{s-},\overline{U}^+_{s})\, \dd {U}^+_{s}\\
      & \quad {} + \int^{t}_{0^{+}} e^{-q s}\,  \partial_{\bar x} v_\epsilon ({U}^+_{s-}, \overline{U}^+_{s})\, \dd \overline{U}^+_{s} + \dfrac{1}{2} \int^{t}_{0^{+}} e^{-q s}  \partial_{xx} v_\epsilon ({U}^+_{s-}, \overline{U}^+_{s})\, \dd \left[ {U}^+ \right] ^{c}_{s}\\
      & \quad {} + \sum_{0 < s \leq t} e^{-q s} \left[ v_\epsilon({U}^+_{s}, \overline{U}^+_{s}) - v_\epsilon({U}^+_{s-}, \overline{U}^+_{s}) -  \partial_x v_\epsilon ({U}^+_{s-}, \overline{U}^+_{s})\, \Delta U^+_{s} \right],
    \end{split}	
  \end{equation*}
  where  $\partial_{xx}v(y,\bar y):=\sum_{i=1}^p g_i''(y+\epsilon)h_i(\bar y+\epsilon)$ and, for any process $Y$, 
  $\left[ Y \right] ^{c}$ stands for the continuous part of the quadratic variation process of $Y$.
  %
  %
  %
  %
  Since $[U^+]^{c}_{s} = [X]^{c}_{s}=\sigma^{2} s$ and 
  \begin{align*}
    v_\epsilon({U}^+_{s}, \overline{U}^+_{s}) - v_\epsilon({U}^+_{s-}, \overline{U}^+_{s}) &= v_\epsilon({U}^+_{s-} + \Delta X_{s} + \Delta {L}^+_{s},\overline{U}^+_{s})- v_\epsilon ({U}^+_{s{-}},\overline{U}^+_{s} )\nonumber\\
    &= v_\epsilon({U}^+_{s{-}} + \Delta X_{s} ,\overline{U}^+_{s})- v_\epsilon ({U}^+_{s{-}},\overline{U}^+_{s})\nonumber\\
    &\quad +  v_\epsilon({U}^+_{s{-}} + \Delta X_{s} + \Delta {L}^+_{s},\overline{U}^+_{s}) -v_\epsilon({U}^+_{s{-}} + \Delta X_{s} ,\overline{U}^+_{s}), 
  \end{align*}
  we get recalling the definition of the operator $\mathcal{A}_q$, \eqref{e:controlled-V} and using that $U_0^+=U_0+L^+_0$, $\overline U_0^+=\overline U_0$ and $\overline U^+_t=(\overline{X+L^+})_0 + \int_{0^+}^t (1-H_s) \mathrm d (\overline{X+L^+})_t$ from 
  \cref{l:sup},
  \begin{align}\label{eq_afterito}
    &e^{-q t}  v_\epsilon({U}^+_{t},\overline{U}^+ _{t}) -  v_\epsilon({U}_{0}, \overline{U}_{0}) \nonumber\\
    &\quad=  M_{t} + \int^{t}_{0^{+}}  e^{-q s}\,\mathcal{A}_q v_\epsilon({U}^+_{s-},\overline{U}^+_{s})\, \dd s  + \int^{t}_{0^{+}} e^{-q s}\, \partial_x v_\epsilon({U}^+_{s-}, \overline{U}^+_{s})\, \dd ({L}^+_{s})^c \nonumber\\
    &\qquad + \sum_{0 \leq s \leq t} e^{-q s} \Bigr\{  v_\epsilon({U}^+_{s{-}}+ \Delta X_{s} + \Delta {L}^+_{s}, \overline{U}^+_{s})-  v_\epsilon({U}^+_{s{-}} + \Delta X_{s}, \overline{U}^+_{s}) \Bigl\}\nonumber\\
    &\qquad - \int^{t}_{0^{+}} e^{-q s} \left[ \partial_x v_\epsilon(U^+_{s-}, \overline{U}^+_{s}) H_{s} -\partial_{\bar{x}} v_\epsilon(U^+_{s-}, \overline{U}^+_{s}) (1- H_{s}) \right] \dd \overline{(X+L^+)}_{s},
  \end{align}
  with the understanding that $U^+_{0-}:=U_0$, $\Delta X_{0}:=X_0$ and $\Delta L_0^+:=L_0^+$ and  where $(L_t^+)^c =L_t^+ - \sum_{0 < s \leq t} \Delta {L}^+_{s}$ and $M_{t}$ is the sum of $M_t^{(1)}$ and $M_{t}^{(2)}$  given by
  \begin{equation*}
    \begin{split}
     &  M_{t}^{(1)} = \int^{t}_{0^{+}} e^{-q s}\, \partial_x v_\epsilon({U}^+_{s-}, \overline{U}^+_{s})\, \dd \left[ X_{s} - c s - \sum_{0 < u \leq s} \Delta X_{u} \mathbf{1}_{\lbrace \vert \Delta X_{u} \vert > 1 \rbrace} \right], \\
     & M_{t}^{(2)} \\
     & =   \sum_{0 < s \leq t} e^{-q s} \Bigl\{v_\epsilon({U}^+_{s{-}} + \Delta X_{s}, \overline{U}^+_{s}) - v_\epsilon({U}^+_{s-}, \overline{U}^+_{s}) - \partial_x v_\epsilon({U}^+_{s-}, \overline{U}^+_{s})\,\Delta X_{s}\, \mathbf{1}_{\lbrace  \vert \Delta X_{s}\vert \leq 1 \rbrace}\Bigr\}\\
      & - \int^{t}_{0^{+}}  \int^{\infty}_{0^{+}} e^{-q s} \Bigl\{ v_\epsilon(U^+_{s-} - \theta, \overline{U}^+_{s}) - v_\epsilon(U^+_{s-},\overline{U}^+_{s}) +\theta\, \partial_x v_\epsilon({U}^+_{s-}, \overline{U}^+_{s})\,\mathbf{1}_{\lbrace  \theta \leq 1\rbrace}\Bigr\} \nu(\dd \theta)\,\dd s.
    \end{split}
  \end{equation*}
  Note that $M^{(1)}$ is a local martingale by the L{\'e}vy-It\^{o} decomposition (see e.g.~\cite[Chapter 2]{Kyp2}) and $M^{(2)}$ is a local martingale as it is an integral of a predictable process against a compensated Poisson random measure, see Theorem II.1.8 and Proposition II.1.21 in \cite{JS-limit}.
  By the regularity assumptions and conditions \ref{itemverif_formbelow0} and \ref{itemverif_derivbound}, $y\mapsto v_\epsilon(y,\bar y)$ is absolutely continuous on $(-\infty,\bar y]$ with density $\partial_x v(y,\bar y)$ bounded by $\eta$, which means in particular
  \begin{equation*}
    v_\epsilon({U}^+_{s{-}}+ \Delta X_{s} + \Delta {L}^+_{s}, \overline{U}^+_{s})-  v_\epsilon({U}^+_{s{-}} + \Delta X_{s}, \overline{U}^+_{s}) \leq \eta \Delta {L}^+_{s},
  \end{equation*}
  where we note that ${U}^+_{s{-}}+ \Delta X_{s} + \Delta {L}^+_{s}\leq  \overline{U}^+_{s}$ since otherwise $\overline{U}^+_s$ is not continuous in $s$.
  Using this in combination with conditions \ref{itemverif_genineq} (which implies $\mathcal{A}_q v_\epsilon(y,\bar y)=0$ for all $0\leq y\leq \bar y$ as $\epsilon>0$) and \ref{itemverif_pdeineq} while noting that  $\{s> 0 :U^+_{s-} < \overline{U}^+_{s}\}$ is a null set for the  Lebesgue–Stieltjes measure $\mathrm d(\overline{X+L^+})_{s}$ by 
  \cref{l:sup},
  we get from \eqref{eq_afterito}, for all $t\geq 0$,
  \begin{equation}\label{veriflem_ineq}
    \begin{split}
      e^{-q t}  v_\epsilon({U}^+_{t},\overline{U}^+ _{t})  
      & \leq  v_\epsilon({U}_{0}, \overline{U}_{0})  +  M_{t}  + \eta\int^{t}_{0} e^{-q s} \dd {L}^+_{s}  
      - \int^{t}_{0^{+}} e^{-q s} H_s \dd (\overline{X+L^+})_{s}.
    \end{split}
  \end{equation}	
  Let $(T_n)_{n\geq 1}$ be a localising sequence for the local martingale $M$. Then $\mathbb E_{x,\bar x}[M_{t\wedge T_n}]=\mathbb E_{x,\bar x}[M_0]=0$ and so, 
  \begin{equation*}
    \begin{split}
      v_\epsilon(x,\bar x)
      & \geq \mathbb E_{x,\bar x} \left[ \int^{t\wedge T_n}_{0^{+}} e^{-q s} H_s \dd (\overline{X+L^+})_{s} \right]  -  \mathbb E_{x,\bar x} \left[ \eta\int^{t\wedge T_n}_{0} e^{-q s} \dd {L}^+_{s} \right]  \\
      & \quad {} +  \mathbb E_{x,\bar x} \left[ e^{-q (t\wedge T_n)}  v_\epsilon({U}^+_{t\wedge T_n},\overline{U}^+ _{t\wedge T_n}) \right].
    \end{split}
  \end{equation*}	
  Letting $t\to\infty$ and $n\to\infty$ we have by the monotone convergence theorem and the dominated convergence theorem in combination with condition \ref{itemverif_bound}, $U_t^+\geq 0$ and Definition \ref{def_admis}\ref{item_admis_expcap}, 
  \begin{equation*}
    v_\epsilon(x,\bar x)
    \geq \mathbb E_{x,\bar x} \left[ \int^{\infty}_{0^{+}} e^{-q s} H_s \dd (\overline{X+L^+})_{s} \right]  -  \mathbb E_{x,\bar x} \left[ \eta\int_{0}^{\infty} e^{-q s} \dd {L}^+_{s} \right]  = v^\pi(x,\bar x).
  \end{equation*}
  Taking $\epsilon\downarrow 0$ and using the joint continuity of $v(\cdot,\cdot)$ (which is implied by continuity of $g_i$ and $h_i$) yields $v(x,\bar x)\geq v^\pi(x,\bar x)$. 
  Since $\pi$ and $(x,\bar x)$ were chosen arbitrarily the lemma follows.    
\end{proof}

\begin{lemma}[Characterisation of the value function]
  \label{l:char}
  Assume that $\int_1^\infty \theta \, \nu(\dd \theta) < \infty$ and $b,\eta\geq 0$.
  Let $a>b$. Let  $v_a\from \RR\times [0,\infty) \to \RR$ be a function satisfying the conditions of the function $v(\cdot,\cdot)$ in Lemma \ref{l:verif} except that conditions \ref{itemverif_pdeineq}-\ref{itemverif_derivbound} are replaced by the following:
  \begin{enumerate}[label=(\roman*'),ref=(\roman*'),start=3]
    %
    \item\label{itemchar_pdeeq} 
      \begin{align*}
        \alpha \partial_x v_a(\bar{x},\bar{x})
        + (\alpha-1)\partial_{\bar{x}} v_a(\bar{x},\bar{x})
        &= \alpha, &\bar{x}&\in[0,b], \\
        \beta \partial_x v_a(\bar{x},\bar{x})
        + (\beta-1) \partial_{\bar{x}} v_a(\bar{x},\bar{x})
        &= \beta, & \bar{x}&\in (b,a],
      \end{align*}

    \item\label{itemchar_valueata} $v_a(a,a)=0$.

  \end{enumerate}
  Then for any $\bar x\in[0,a]$ and $x\leq \bar x$,
  \begin{equation*}
    v_a(x,\bar{x})
    =
    \EE_{x,\bar{x}}
    \left[
      \int_{0^+}^{\tau_a^+} e^{-qs} \delta_b(V_s) \, \dd (\overline{X+K})_s
      -
      \eta
      \int_0^{\tau_a^+} e^{-qs} \, \dd K^+_s
    \right],
  \end{equation*}
  where $(V,K)$ is the natural tax process with minimal bailouts and threshold tax rate $\delta_b$, and $\tau_a^+=\inf\{t\geq 0:V_t\geq a\}$.
\end{lemma}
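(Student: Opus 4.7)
The strategy is to mirror the proof of the verification lemma, applied to the natural tax process $(V^+,K^+)$ of Definition~\ref{d:VK} in the role of the ``controlled process'' and stopped at $\tau_a^+$. The key observation is that each of the three inequalities used in the proof of Lemma~\ref{l:verif} becomes an equality here: the drift condition \ref{itemverif_genineq} is unchanged; the diagonal inequality \ref{itemverif_pdeineq} is replaced by the equality \ref{itemchar_pdeeq} (which is exactly what is needed, because the tax rate here is $\delta_b(\overline V_s)$, and the two cases of \ref{itemchar_pdeeq} match the values $\alpha$ and $\beta$ taken by $\delta_b$ in the two regimes); and the bailout-jump bound becomes exact, because at any time $s$ with $\Delta K^+_s>0$ the complementarity condition \eqref{compl_condition} forces $V^+_{s-}+\Delta X_s=-\Delta K^+_s<0$, and then condition \ref{itemverif_formbelow0} gives that the increment of $v_a$ at that jump equals \emph{exactly} $\eta\Delta K^+_s$, not merely at most this.

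Concretely, I fix $\epsilon>0$ and set $v_{a,\epsilon}(y,\bar y)=v_a(y+\epsilon,\bar y+\epsilon)$. Following the derivation of \eqref{eq_afterito} with $(U^+,L^+,H)$ replaced by $(V^+,K^+,\delta_b(V_s))$, stopping at $t\wedge\tau_a^+$, substituting the three equalities just described, and using Lemma~\ref{l:sup} to identify the support of $\dd\overline{X+K^+}$ with $\{V^+_{s-}=\overline V^+_s\}$ (together with $\dd\overline{X+K^+}=\dd\overline{X+K}$ and $V=V^+$ outside a countable set, as observed after \eqref{VK_eq}), I arrive at the identity
\begin{equation*}
  v_{a,\epsilon}(x,\bar x)+M_{t\wedge\tau_a^+}
  =e^{-q(t\wedge\tau_a^+)}v_{a,\epsilon}(V^+_{t\wedge\tau_a^+},\overline V^+_{t\wedge\tau_a^+})
  +\int_{0^+}^{t\wedge\tau_a^+}\!\!e^{-qs}\delta_b(V_s)\,\dd\overline{X+K}_s
  -\eta\int_0^{t\wedge\tau_a^+}\!\!e^{-qs}\,\dd K^+_s,
\end{equation*}
where $M$ is the same local martingale appearing in the proof of Lemma~\ref{l:verif}.

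Finally, I take expectations along a localising sequence for $M$, let $t\to\infty$, and then $\epsilon\downarrow 0$. The two monotone integrals converge by monotone convergence. For the boundary term $e^{-q(t\wedge\tau_a^+)}v_{a,\epsilon}(V^+_{t\wedge\tau_a^+},\overline V^+_{t\wedge\tau_a^+})$ I argue that since $X$ has no positive jumps and $K^+$ only jumps on $\{V^+=0\}$ by \eqref{compl_condition}, the process $V^+$ can reach new maxima only continuously, so on $\{\tau_a^+<\infty\}$ one has $V^+_{\tau_a^+}=\overline V^+_{\tau_a^+}=a$, and condition \ref{itemchar_valueata} together with joint continuity of $v_a$ makes the boundary term converge to $v_a(a,a)=0$; on $\{\tau_a^+=\infty\}$, boundedness of $v_a$ on the compact set $\{0\le x\le\bar x\le a\}$ (automatic from continuity of the $g_i$ and $h_i$) combined with the factor $e^{-qt}$ kills the limit via dominated convergence. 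The principal technical obstacle is precisely this identification $V^+_{\tau_a^+}=a$ and the uniform bound needed to pass to the limit in the boundary term; the rest is bookkeeping that transcribes the It\^o expansion from Lemma~\ref{l:verif} with equalities in place of inequalities.
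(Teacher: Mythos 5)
Your overall strategy is the paper's: redo the It\^o computation from the verification lemma for $(V^+,K^+)$ stopped at $\tau_a^+$, with the inequalities upgraded to equalities, then localise and pass to the limit; your treatment of the boundary term (no upward jumps, so $V^+_{\tau_a^+}=\overline V^+_{\tau_a^+}=a$ and condition \ref{itemchar_valueata} applies) also matches the paper. However, there is a genuine gap in the middle step: the ``identity'' you claim for fixed $\epsilon>0$ is false as stated. First, with your choice $v_{a,\epsilon}(y,\bar y)=v_a(y+\epsilon,\bar y+\epsilon)$ the diagonal equalities \ref{itemchar_pdeeq} no longer align with the tax rate: for $\overline V_s\in(b-\epsilon,b]$ the rate is $\alpha$ but the shifted point lies in the $\beta$-regime, and for $\overline V_s\in(a-\epsilon,a]$ the shifted point leaves $(b,a]$, where nothing is assumed (the paper deliberately shifts only the first argument for this reason, and even then can only use \ref{itemchar_pdeeq} after sending $\epsilon\downarrow 0$, since at fixed $\epsilon$ the derivatives are evaluated off the diagonal, at $(\overline V^+_s+\epsilon,\overline V^+_s)$). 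Second, and more seriously, the bailout terms are not exactly $\eta$-linear for $\epsilon>0$: at a bailout jump one has $V^+_{s-}+\Delta X_s+\Delta K^+_s=0$, so the increment of $v_{a,\epsilon}$ is $v_a(\epsilon,\cdot)-v_a(\epsilon-\Delta K^+_s,\cdot)$, which equals $\eta\Delta K^+_s$ only at $\epsilon=0$ (condition \ref{itemverif_formbelow0} is a statement about $x<0$ only). Consequently you cannot ``substitute the three equalities'' before removing $\epsilon$; the correct order, as in the paper, is to keep the raw It\^o identity, take expectations along the localising sequence, justify $\epsilon\downarrow 0$ inside the expectation by dominated convergence (using boundedness of $v_a,\partial_x v_a,\partial_{\bar x}v_a$ on $[0,a+1]\times[0,a]$ and the pathwise bound $(1-\beta)K^+_{t\wedge\tau_a^+}\le a$ from Lemma~\ref{l:sup}), and only then insert the exact values.

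A second omission: your final formula charges \emph{all} of $K^+$ at rate $\eta$, but your argument only covers the jump part of $K^+$. When $X$ has unbounded variation, $K^+$ has a nontrivial continuous part coming from reflection at $0$, and the It\^o expansion produces the term $\int e^{-qs}\partial_x v_a(V^+_{s-},\overline V^+_s)\,\dd(K^+_s)^c$. To identify this with $\eta\int e^{-qs}\,\dd(K^+_s)^c$ one needs that $\{s>0:V^+_{s-}>0\}$ is null for $\dd(K^+_s)^c$ together with $\partial_x v_a(0,\bar x)=\eta$, which follows from \ref{itemverif_formbelow0} and the continuity of $\partial_x v_a$ assumed only in the unbounded-variation case; in the bounded-variation case one instead uses $(K^+)^c\equiv 0$. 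Since the characterisation lemma does not assume the bound \ref{itemverif_derivbound}, this equality is not automatic and the case distinction cannot be skipped. These points are all repairable along the paper's lines, but as written the step ``I arrive at the identity'' would fail.
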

\begin{proof}[Proof of \cref{l:char} (characterisation of the value function)]
  Fix $a>b$. We fix $\epsilon>0$ and let $v_{a,\epsilon}\from \RR\times [0,\infty) \to \RR$ be the function defined by $v_{a,\epsilon}(x,\bar x)=v_a(x+\epsilon,\bar x)$, though we purposely do not also shift the second argument by $\epsilon$ as we did  in the proof of Lemma \ref{l:verif}. Because $\pi=(\delta_b(V),K)$ satisfies the conditions in Definition \ref{def_admis} except (possibly) for item \ref{item_admis_expcap} (recall Definition \ref{d:VK} and adaptedness follows from Proposition \ref{prop_markov}\ref{item_VK_adapted}) and $v_a$ satisfies the regularity conditions of $v$ in Lemma \ref{l:verif}, we deduce, by following the proof of Lemma \ref{l:verif}, the identity \eqref{eq_afterito} but with $U$ replaced by $V$, $L$ replaced by $K$, $v_\epsilon$ by $v_{a,\epsilon}$ and $H$ by $\delta_b(V)$. 
  Then replacing $t$ by $\tau_a^+\wedge t\wedge T_n$ with $(T_n)_{n\geq 1}$ a localising sequence for the local martingale $(M_t)_{t\geq 0}$ and taking expectations as well as using condition \ref{itemverif_genineq} in Lemma \ref{l:verif}, we get, for $\bar x\geq 0$ and $x\leq \bar x$,  
  \begin{equation}\label{eq_charlem_witheps}
    \begin{split}
   &   v_{a,\epsilon}(x,\bar x)  =  \EE_{x,\bar{x}} \Bigg[ \int^{\tau_a^+\wedge t\wedge T_n}_{0^{+}} e^{-q s} \\
   & \times \left[ \partial_x v_{a,\epsilon}(V^+_{s-}, \overline{V}^+_{s}) \delta_b(V_{s}) -\partial_{\bar{x}} v_{a,\epsilon}(V^+_{s-}, \overline{V}^+_{s}) (1- \delta_b(V_{s})) \right] \dd (\overline{X+K^+})_{s}    \\
        & -  \sum_{0 \leq s \leq \tau_a^+\wedge t\wedge T_n} e^{-q s} \Bigr\{  v_{a,\epsilon}({V}^+_{s{-}}+ \Delta X_{s} + \Delta {K}^+_{s}, \overline{V}^+_{s})-  v_{a,\epsilon}({V}^+_{s{-}} + \Delta X_{s}, \overline{V}^+_{s}) \Bigl\}   \\
        &   -  \int^{\tau_a^+\wedge t\wedge T_n}_{0^{+}} e^{-q s}\, \partial_x v_{a,\epsilon}({V}^+_{s-}, \overline{V}^+_{s})\, \dd ({K}^+_{s})^c   
      + 	e^{-q (\tau_a^+\wedge t\wedge T_n) }  v_{a,\epsilon}({V}^+_{\tau_a^+\wedge t\wedge T_n},\overline{V}^+ _{\tau_a^+\wedge t\wedge T_n}) \Bigg].
    \end{split}
  \end{equation}
  By the smoothness assumptions, $v_a(\cdot,\cdot)$, $\partial_x v_a(\cdot,\cdot)$ and $\partial_{\bar x}v_a(\cdot,\cdot)$ are all bounded on $[0,a+1]\times [0,a]$ and right-continuous in the first argument.
  By the mean value theorem, for all $\epsilon\in(0,1)$,
  \begin{multline*}
    \sum_{0 \leq s \leq \tau_a^+\wedge t\wedge T_n} e^{-q s} \Bigr|  v_{a,\epsilon}({V}^+_{s{-}}+ \Delta X_{s} + \Delta {K}^+_{s}, \overline{V}^+_{s})-  v_{a,\epsilon}({V}^+_{s{-}} + \Delta X_{s}, \overline{V}^+_{s}) \Bigl| \\
    \leq \sup_{y\in[0,a+1],z\in[0,a]} \partial_x v_{a}(y,z)\sum_{0 \leq s \leq \tau_a^+\wedge t\wedge T_n} e^{-q s}  \Delta {K}^+_{s}.
  \end{multline*}
  Further, $a\geq \overline V^+_{t\wedge \tau_a^+} \geq (1-\beta)(\overline{X+K^+})_{t\wedge \tau_a^+}\geq   (1-\beta)K^+_{t\wedge \tau_a^+}$ where the second inequality is due to 
  \cref{l:sup}.
  From these observations it is justified to apply the dominated convergence
  and take $\epsilon\downarrow 0$ in \eqref{eq_charlem_witheps} to conclude
  that \eqref{eq_charlem_witheps} holds with $\epsilon=0$.  By definition of
  $(V^+,K^+)$, $V_{s-}^+ + \Delta X_s + \Delta K_s^+ = 0$ if $\Delta K_s^+>0$.
  Hence  $v_a({V}^+_{s{-}}+ \Delta X_{s} + \Delta {K}^+_{s},
  \overline{V}^+_{s})-  v_a({V}^+_{s{-}} + \Delta X_{s},
  \overline{V}^+_{s})=\eta \Delta K_s^+$ by condition
  \ref{itemverif_formbelow0} of Lemma \ref{l:verif}. Further, $\{s> 0:
  V^+_{s-}>0\}$ is a null set for the Lebesgue-Stieltjes measure $\mathrm d
  (K_s^+)^c$. To see this, note that $\{s> 0:
  V^+_{s-}>0\}\subset\{s>0:V_s^+>0\}\cup\{s>0:\Delta V_s^+>0\}$ and
  $\{s>0:V_s^+>0\}$ is a null set for $\mathrm d K_s^+$ by
  \eqref{compl_condition}
  and $\{s>0:\Delta V_s^+>0\}$ is countable (because a
  c\`adl\`ag path has a countable number of jumps) and so is a null set for the
  continuous measure $\mathrm d (K_s^+)^c$. 
  We also note that $\partial_x v_a(0,\bar x)=\eta$ if the spectrally negative L\'evy process $X$ has paths of unbounded variation by continuity of $\partial_x v_a$ in that case, whereas $({K}^+_{s})^c=0$ for all $s\geq 0$ if $X$ has paths of bounded variation. Using these observations in combination with \eqref{eq_charlem_witheps} for $\epsilon=0$ and \ref{itemchar_pdeeq} (recall from the proof of Lemma \ref{l:verif} that  $\{s> 0 :V^+_{s-} < \overline{V}^+_{s}\}$ is a null set for the measure $\mathrm d(\overline{X+K^+})_{s}$) yields
  \begin{multline*}
    v_{a}(x,\bar x) 
    =  \EE_{x,\bar{x}}\Bigg[  \int^{\tau_a^+\wedge t\wedge T_n}_{0^{+}} e^{-q s} \delta_b(V_{s})\dd (\overline{X+K^+})_{s}  
      - \eta  \int^{\tau_a^+\wedge t\wedge T_n}_{0} e^{-q s}\,  \dd {K}^+_{s}  \\
    {} + e^{-q (\tau_a^+\wedge t\wedge T_n) }  v_{a}({V}^+_{\tau_a^+\wedge t\wedge T_n},\overline{V}^+ _{\tau_a^+\wedge t\wedge T_n}) \Bigg].
  \end{multline*}
  With $(\Psi,\Phi)$ the reflection map from the proof of Theorem \ref{p:V},
  \begin{equation*}
    \overline V^+_t \geq (1-\beta)(\overline{X+K^+})_t\geq (1-\beta)(\overline{X+\Psi_t(X)}) =(1-\beta)\overline{\Phi_t(X)}, \quad t\geq 0,
  \end{equation*}
  where the second inequality is due to $K^+_t\geq \Psi_t(X)$ since $X_t\geq V_t^+$ and the equality is by definition of the reflection map. 
  Because $\overline{\Phi_t(X)}\to\infty$ as $t\to\infty$, 
   it follows that $\tau_a^+<\infty$. Hence $\tau_a^+\wedge t\wedge T_n\to\tau_a^+$ as $n,t\to\infty$.
  So letting $t\to\infty$ and $n\to\infty$, we get the desired identity by the dominated convergence theorem and  using condition \ref{itemchar_valueata} in combination with the process $V^+$ not having upward jumps. 
\end{proof}


\phantomsection
\label{s:containing-p:value}

\begin{proof}[Proof of \cref{p:value}] 
  For $\gamma \in [0,1)$, $a>0$ and $x\geq 0$, define
  \begin{equation*}
    R_{\gamma,a}(x) = 
    \begin{cases}
      \frac\gamma{1-\gamma} Z^{(q)}(x)^{\frac1{1-\gamma}}\int_x^a Z^{(q)}(y)^{-\frac1{1-\gamma}}(1-\eta Z^{(q)}(y))\mathrm d y & \text{if $\gamma\in(0,1)$ and $x\leq a$}, \\
      0 & \text{if $\gamma=0$ or $x>a$}.
    \end{cases}
  \end{equation*}
  Fix $a\in (b,\infty)$ and define $v_a\from \RR\times[0,\infty) \to \RR$ by
  \begin{equation}\label{guess}
    \begin{split}
      v_a(x,\bar x)
      & = \eta \left( \overline Z^{(q)}(x)+\frac{\psi'(0)}q \right)   \\
      &\quad {} + \frac{Z^{(q)}(x)}{Z^{(q)}(\bar x)} \left\{ R_{\alpha,a}(\bar x) + \left(  \frac{Z^{(q)}(\bar x)}{Z^{(q)}(\bar x\vee b)} \right)^{\frac1{1-\alpha}} ( R_{\beta,a}(\bar x \vee b) - R_{\alpha,a}(\bar x\vee b) )  \right\} \\
      &\quad {} - \eta \frac{Z^{(q)}(x)}{Z^{(q)}(a)} \left( \overline Z^{(q)}(a)+\frac{\psi'(0)}q \right) \left(  \frac{Z^{(q)}(\bar x)}{Z^{(q)}(\bar x\vee b)} \right)^{\frac\alpha{1-\alpha}} \left(  \frac{Z^{(q)}(\bar x\vee b)}{Z^{(q)}(a)} \right)^{\frac\beta{1-\beta}}.
    \end{split}
  \end{equation}
  We claim that for all $(x,\bar x)\in\RR\times [0,\infty)$ such that $x\leq \bar x\leq a$,
  \begin{equation}\label{claim_char_stopata}
    v_a(x,\bar x)
    =  \EE_{x,\bar{x}}
    \left[
      \int_0^{\tau_a^+} e^{-qs} \delta_b(V_s) \, \dd (\overline{X+K})_s
      -
      \eta
      \int_0^{\tau_a^+} e^{-qs} \, \dd K^+_s
    \right],
  \end{equation}
  see also Remark \ref{remark_guess} below.
  Once \eqref{claim_char_stopata} is shown, \cref{p:value} follows, first for the $\eta=0$ case and then for the general case, by taking $a\to\infty$  and using the monotone convergence theorem and l'H\^opital's rule in combination with \eqref{limit_ratio_scale} (recall the assumption $\beta>0$).
  To prove \eqref{claim_char_stopata} we show that $v_a$ satisfies the conditions of \cref{l:char}. Regarding the smoothness conditions we have that $v_a$ is of the form $\sum_{i=1}^2 g_i(x)h_i(\bar x)$ with $g_1(x)=Z^{(q)}(x)$, $g_2(x)=\eta \left( \overline{Z}^{(q)}(x)+ \frac{\psi'(0)}{q} \right)$, $h_2(\bar x)=1$ and
  \begin{equation*}
    \begin{split}
      h_1(\bar x)
      &= \frac1{Z^{(q)}(\bar{x})} \left\{ R_{\alpha,a}(\bar x) + \left(  \frac{Z^{(q)}(\bar x)}{Z^{(q)}(\bar x\vee b)} \right)^{\frac1{1-\alpha}} ( R_{\beta,a}(\bar x \vee b) - R_{\alpha,a}(\bar x\vee b) )  \right\} \\
      & \quad {} -   \frac{\eta}{Z^{(q)}(a)} \left( \overline Z^{(q)}(a)+\frac{\psi'(0)}q \right) \left(  \frac{Z^{(q)}(\bar x)}{Z^{(q)}(\bar x\vee b)} \right)^{\frac\alpha{1-\alpha}} \left(  \frac{Z^{(q)}(\bar x\vee b)}{Z^{(q)}(a)} \right)^{\frac\beta{1-\beta}}.
    \end{split}
  \end{equation*}
  Recall the regularity properties of $W^{(q)}$ mentioned in Section \ref{sec_levy}. They imply that $g_1$ satisfies the required smoothness conditions in \cref{l:char} and then obviously so does $g_2$. Further, they imply that $h_1$ is continuous on $[0,\infty)$ and continuously differentiable  on $[0,\infty)\backslash\{b,a\}$ with the left-derivative serving as the left-continuous density $h_1'$ on $[0,\infty)$. Therefore, $h_1$ satisfies the required smoothness conditions in \cref{l:char} as well. It is easy to see that $v_a$ satisfies condition \ref{itemverif_formbelow0} in \cref{l:verif} by definition of $Z^{(q)}$ and $\overline Z^{(q)}$. Next, it is well-known that $\Aa_q Z^{(q)}(x)=0$ and  $\Aa_q\overline Z^{(q)}(x)=\psi'(0)$ for all $x>0$. 
  This can be deduced by using martingale arguments and stochastic calculus
  (see, for instance, (3.7), Proposition 2 and Lemma 5 in \cite{APP2008});
  or, alternatively, by taking Laplace transforms and using Fubini's theorem.
  So \ref{itemverif_genineq} in Lemma \ref{l:verif} is satisfied as well. 
  It is left to show that $v_a$ satisfies conditions \ref{itemchar_pdeeq} and \ref{itemchar_valueata} in \cref{l:char}. One easily sees that \ref{itemchar_valueata} holds. For \ref{itemchar_pdeeq}, by
  observing that
  \begin{equation}\label{Rgamderiv}
    R_{\gamma,a}'(x) = \frac1{1-\gamma}\frac{Z^{(q)\prime}(x)}{Z^{(q)}(x)}   R_{\gamma,a}(x) - \frac\gamma{1-\gamma}  ( 1-\eta Z^{(q)}(x) )  , \quad x\in[0,a),
  \end{equation}
  we get for $\bar x\in(b,a]$,
  \begin{equation}\label{derivcond_aboveb}
    \begin{split}
      \gamma \partial_x & v_a(\bar{x},\bar{x}) 
      + (\gamma-1)\partial_{\bar{x}} v_a(\bar{x},\bar{x}) \\
      & = \gamma Z^{(q)\prime}(\bar x) h_1(\bar x) + \gamma \eta Z^{(q)}(\bar x) + (\gamma-1)Z^{(q)}(\bar x) h_1'(\bar{x}) \\
      & =  R_{\beta,a}(\bar x)\frac{Z^{(q)\prime}(\bar x)}{Z^{(q)}(\bar x)} \left( 1-\frac{\gamma-1}{\beta-1} \right)  +\beta\frac{\gamma-1}{\beta-1} + \eta Z^{(q)}(\bar x) \left( \gamma - \beta\frac{\gamma-1}{\beta-1} \right) \\
      & \quad {} - \eta \frac{Z^{(q)\prime}(\bar x)}{Z^{(q)}(a)} \left( \overline Z^{(q)}(a)+\frac{\psi'(0)}q \right) \left(  \frac{Z^{(q)}(\bar x)}{Z^{(q)}(a)} \right)^{\frac\beta{1-\beta}} \left( \gamma  + \frac{(\gamma-1)\beta}{1-\beta}  \right) .
    \end{split}
  \end{equation}
  Similarly, for $0\leq \bar x\leq b$,
  \begin{equation}\label{derivcond_belowb}
    \begin{split}
     & \gamma \partial_x  v_a(\bar{x},\bar{x})
      + (\gamma-1)\partial_{\bar{x}} v_a(\bar{x},\bar{x}) \\
      & = \left( R_{\alpha,a}(\bar x) + \left(  \frac{Z^{(q)}(\bar x)}{Z^{(q)}(b)} \right)^{\frac1{1-\alpha}} ( R_{\beta,a}(b) - R_{\alpha,a}(b) )  \right) \frac{Z^{(q)\prime}(\bar x)}{Z^{(q)}(\bar x)} \left( 1-\frac{\gamma-1}{\alpha-1} \right) +\alpha\frac{\gamma-1}{\alpha-1} \\
      & \quad {} + \eta  Z^{(q)}(\bar x)  \left( \gamma  - \alpha\frac{\gamma-1}{\alpha-1}   \right) \\
      & \quad {} - \eta \frac{Z^{(q)\prime}(\bar x)}{Z^{(q)}(a)} \left( \overline Z^{(q)}(a)+\frac{\psi'(0)}q \right)  \left(  \frac{Z^{(q)}(b)}{Z^{(q)}(a)} \right)^{\frac\beta{1-\beta}} \left(  \frac{Z^{(q)}(\bar x)}{Z^{(q)}(b)} \right)^{\frac\alpha{1-\alpha}} \left( \gamma  + \frac{(\gamma-1)\alpha}{1-\alpha}  \right) .
    \end{split}
  \end{equation}
  From \eqref{derivcond_aboveb} and \eqref{derivcond_belowb} we immediately see that condition \ref{itemchar_pdeeq} in \cref{l:char} is satisfied which completes the proof.
\end{proof}
\begin{remark}\label{remark_guess}
  We explain here briefly (details can be found in \cite{AlGhanim-thesis}) how we arrived at the guess \eqref{guess} as the (correct) expression for the expectation in \eqref{claim_char_stopata}. To this end, denote, for $\bar x\in[0,a]$ and $x\leq 0$, by $\widetilde v_a(x,\bar x)$ the right hand side of \eqref{claim_char_stopata}.  First, use known fluctuation identities for spectrally negative L\'evy processes reflected at their infimum to derive an expression for $\widetilde v_a(x,\bar x)$ in terms of scale functions and $\widetilde v_a(\bar x,\bar x)$. Second, use this expression to compute $\partial_x \widetilde v_a(\bar x,\bar x)$ and plug it into the equation \ref{itemchar_pdeeq} in Lemma \ref{l:char}. This yields then an ordinary differential equation for $\widetilde v_a(\bar x,\bar x)$ which, together with the boundary condition \ref{itemchar_valueata} in Lemma \ref{l:char}, can be uniquely solved. This will then yield the guess \eqref{guess}  for $\widetilde v_a(x,\bar x)$. Note that this line of arguments does not constitute a proof of \eqref{claim_char_stopata} because this would involve showing in addition that $\widetilde v_a(x,\bar x)$ satisfies,  a priori,  \ref{itemchar_pdeeq} in Lemma \ref{l:char}, which is non-trivial.
\end{remark}	

\subsection{Proof of \cref{t:control}}\label{sec_proof_mainthm}

Let $v:\mathbb R\times[0,\infty)\to\mathbb R$ be such that $v(x,\bar x)$ is equal to the right hand side of \eqref{e:vtaxinj} for all $x\in\mathbb R$ and $\bar x\geq 0$ and with $b=b^*$. Clearly $v(x,\bar x)\leq v^*(x,\bar x)$ for $x\leq \bar x$ because, for $x\leq \bar x$, $v$ is the value function of the control $(\delta_{b^*}(V),K)$ which is admissible (in particular Definition \ref{def_admis}\ref{item_admis_expcap} is implied by Proposition \ref{p:value}). So Theorem \ref{t:control} is proved once we show that $v$ satisfies all conditions of the verification lemma (Lemma \ref{l:verif}). Following the arguments in the proof of Proposition \ref{p:value} we see that the smoothness conditions as well as conditions \ref{itemverif_formbelow0} and \ref{itemverif_genineq} of Lemma \ref{l:verif} are satisfied. It remains to show that conditions \ref{itemverif_pdeineq}, \ref{itemverif_bound} and \ref{itemverif_derivbound} of Lemma \ref{l:verif} are satisfied. We start with condition \ref{itemverif_pdeineq}.
From \eqref{derivcond_aboveb} with $a\to\infty$, for $\bar x> b^*$ and  $\gamma\in[\alpha,\beta]$,
\begin{equation*}
  \gamma \partial_x v(\bar{x},\bar{x}) 
  + (\gamma-1)\partial_{\bar{x}} v(\bar{x},\bar{x}) 
  - \gamma
  =  \biggl( \eta Z^{(q)}(\bar x) + R_\beta(\bar x)\frac{Z^{(q)\prime}(\bar x)}{Z^{(q)}(\bar x)} - 1\biggr)   \frac{\beta-\gamma}{\beta-1}
  \geq  0,
\end{equation*}
where the inequality is due to $C(\bar x)\leq Q(\bar x)$ for $\bar x> b^*$, see Lemma \ref{lem_CQ}, which implies $R_\beta(\bar x)\leq \frac{Z^{(q)}(\bar x)}{Z^{(q)\prime}(\bar x)} (1-\eta Z^{(q)}(\bar x))$ in combination with $\frac{\beta-\gamma}{\beta-1}\leq 0$ for all $\gamma\in[\alpha,\beta]$. On the other hand,  by \eqref{derivcond_belowb} (taking $a\to\infty$), we have  for $\bar x\in[0,b^*]$ and  $\gamma\in[\alpha,\beta]$,
\begin{equation*}
  \begin{split}
   &  \gamma   \partial_x v(\bar{x},\bar{x})  + (\gamma-1)\partial_{\bar{x}} v(\bar{x},\bar{x}) -\gamma \\
    & = \left( \eta  Z^{(q)}(\bar x)  + \left\{ R_\alpha(\bar x) + \left(  \frac{Z^{(q)}(\bar x)}{Z^{(q)}(b^*)} \right)^{\frac1{1-\alpha}} ( R_\beta(b^*) - R_\alpha(b^*) )  \right\}  \frac{Z^{(q)\prime}(\bar x)}{Z^{(q)}(\bar x)}  - 1 \right)  \frac{\alpha-\gamma}{\alpha-1} \\
    & =  \left( \eta  Z^{(q)}(\bar x)  + \left\{ R_\alpha(\bar x) + Z^{(q)}(\bar x)^{\frac1{1-\alpha}} C(b^*) \right\}  \frac{Z^{(q)\prime}(\bar x)}{Z^{(q)}(\bar x)}  - 1 \right)  \frac{\alpha-\gamma}{\alpha-1} \\
    & \geq 0 ,
  \end{split}
\end{equation*}
where the inequality is due to $C(b^*)\geq C(\bar x)\geq Q(\bar x)$ for $\bar x\in[0,b^*]$, see Lemma \ref{lem_CQ}, in combination with $\frac{\alpha-\gamma}{\alpha-1}\geq 0$  for all $\gamma\in[\alpha,\beta]$. 


Next we look at condition \ref{itemverif_bound} of Lemma \ref{l:verif}.
Let us denote $v(x,\bar x)=v^{(\eta)}(x,\bar x)$ to indicate the dependence of $v$ on $\eta$. Further let $D=\{x\geq 0, \bar x\geq 0: x\leq \bar x\}$. We first deal with the case $\eta=0$. For $\bar x\geq b$ and $x\leq \bar x$, $0\leq v^{(0)}(x,\bar x) \leq v^{(0)}(\bar x,\bar x)= R_\beta(\bar x)$. Since for $\eta=0$,
\begin{equation*}
  \lim_{x\to\infty} R_\beta(x) = 
  \frac\beta{1-\beta} 	\lim_{x\to\infty} \frac{ \int_x^\infty Z^{(q)}(y)^{-\frac1{1-\beta}}\mathrm d y }{ Z^{(q)}(x)^{-\frac1{1-\beta}} } 
  = \beta 	\lim_{x\to\infty} \frac{ Z^{(q)}(x) }{q W^{(q)}(x) } = \frac{\beta}{\Phi(q)}
\end{equation*}	
by l'H\^opital's rule in combination with \eqref{limit_ratio_scale} and since $v^{(0)}(\cdot,\cdot)$ is continuous on $D$, it follows that $v^{(0)}(\cdot,\cdot)$ is bounded on $D$. Let now $\eta>0$. Then $v^{(0)}(x,\bar x)-v^{(\eta)}(\bar x,\bar x)= \EE_{x,\bar{x}}
\left[ \eta \int_0^\infty e^{-qs} \, \dd K_s \right]$ and it is easy to see that the right hand side is decreasing in $x$ and decreasing in $\bar x$ from the definition of $(V,K)$ via the tax-reflection transform (recall Definition \ref{d:VK}). So for all $(x,\bar x)\in D$, $|v^{(\eta)}(x,\bar x)|\leq v^{(0)}(x,\bar x) + v^{(0)}(0,0)-v^{(\eta)}(0,0)$ and thus $v^{(\eta)}(\cdot,\cdot)$ is bounded on $D$.

Finally, we check that condition \ref{itemverif_derivbound} of Lemma \ref{l:verif} holds.
Recall $\overline W^{(q)}(x)=\int_0^x W^{(q)}(y)\mathrm d y$ for $x\geq 0$.
For $\bar x\geq b^*$ and $x\in[0,\bar x]$,
\begin{equation*}
  \begin{split}
    \partial_x v(x,\bar{x}) -\eta
    &= \eta (Z^{(q)}(x)-1)  +  R_\beta(\bar x)\frac{Z^{(q)\prime}(x)}{Z^{(q)}(\bar x)} \\
    & \leq  \eta q \overline W^{(q)}(x)  +   (1-\eta Z^{(q)}(\bar x)) \frac{Z^{(q)\prime}(x)}{Z^{(q)\prime}(\bar x)}  \\
    & \leq \eta q \left( \overline W^{(q)}(x)  + \overline W^{(q)}(\bar x)  \frac{W^{(q)}(x)}{W^{(q)}(\bar x)} \right) \\
    & \leq  0,
  \end{split}
\end{equation*}
where the first inequality is since $R_\beta(\bar x)\leq \frac{Z^{(q)}(\bar x)}{Z^{(q)\prime}(\bar x)} (1-\eta Z^{(q)}(\bar x))$ by Lemma \ref{lem_CQ}, the second inequality is due to $\eta\geq 1$ and the last inequality is due to  $x\leq \bar x$ and the (strict) log-concavity of $\overline W^{(q)}(\cdot)$ on $(0,\infty)$, where the latter follows from \eqref{avrametal} as this identity implies 
\begin{equation*}
  \overline W^{(q)}(a) \overline W^{(q)\prime\prime}(a)  - \overline W^{(q)\prime}(a)^2 = \frac1q \left( \mathbb E_{0,0} \left[ e^{-q \hat\tau_a} \right]-1 \right) \overline W^{(q)\prime\prime}(a)  <0, \quad a>0.
\end{equation*}
On the other hand, for $\bar x\in[0,b^*)$ and $x\in[0,\bar x]$,
\begin{equation*}
  \begin{split}
    \partial_x v(x,\bar{x}) -\eta &=  \eta (Z^{(q)}(x)-1)  +  \left\{ R_\alpha(\bar x) + Z^{(q)}(\bar x)^{\frac1{1-\alpha}} C(b^*) \right\}   \frac{Z^{(q)\prime}(x)}{Z^{(q)}(\bar x)} \\
    &\leq  \eta (Z^{(q)}(x)-1)  +  \left\{ R_\alpha(b^*) + Z^{(q)}(b^*)^{\frac1{1-\alpha}} Q(b^*) \right\}   \frac{Z^{(q)\prime}(x)}{Z^{(q)}(b^*)} \\
    &=  \eta q \overline W^{(q)}(x)   +    (1-\eta Z^{(q)}(b^*))     \frac{Z^{(q)\prime}(x)}{Z^{(q)\prime}(b^*)} \\
    &\leq  \eta q \left( \overline W^{(q)}(x)  + \overline W^{(q)}(b^*) \frac{W^{(q)}(x)}{W^{(q)}(b^*)} \right) \\
    &\leq  0,
  \end{split}
\end{equation*}
where the last two inequalities follow by the same arguments as in the previous case and the first inequality follows because $C(b^*)\leq Q(b^*)$ by Lemma \ref{lem_CQ} and because, via the analogue of \eqref{Rgamderiv} for $R_\gamma$ and the definition of $Q(\cdot)$ in \eqref{def_CQ}, for $\bar x\in[0,b^*)$,
\begin{equation*}
  \begin{split}
    \frac{ R_\alpha(\bar x) + Z^{(q)}(\bar x)^{\frac1{1-\alpha}} C(b^*) }{Z^{(q)}(\bar x)} & = \frac{ R_\alpha(b^*) + Z^{(q)}(b^*)^{\frac1{1-\alpha}} C(b^*) }{Z^{(q)}(b^*)} \\
    & \quad {} - \int_{\bar x}^{b^*} \frac\alpha{1-\alpha} \frac{Z^{(q)\prime}(y)}{Z^{(q)}(y)^2} Z^{(q)}(y)^{\frac1{1-\alpha}} (C(b^*) - Q(y) )  \mathrm d y \\
    &\leq  \frac{ R_\alpha(b^*) + Z^{(q)}(b^*)^{\frac1{1-\alpha}} C(b^*) }{Z^{(q)}(b^*)},
  \end{split}
\end{equation*}
where the inequality is due to $C(b^*)\geq C(y)\geq Q(y)$ for $y\in[0,b^*)$, see Lemma \ref{lem_CQ}. 


\bibliographystyle{abbrvnat}
\bibliography{everything}

\end{document}